\newtheorem{proposition}{Proposition}
\newtheorem{observation}{Observation}
\newtheorem{lemma}{Lemma}
\newtheorem{theorem}{Theorem}
\newtheorem{corollary}{Corollary}
\newcommand{\set}[2]{\left\{#1\,\middle|\,#2\right\}}
\title{A Linear Input Dependence Model for Interdependent Networks}
\author{Hemanshu Kaul\footnote{Department of Applied Mathematics, Illinois Institute of Technology, Chicago, IL 60616. E-mail: \href{mailto:kaul@iit.edu}{\tt kaul@iit.edu}} \, and Adam Rumpf\footnote{Department of Applied Mathematics, Florida Polytechnic University, Lakeland, FL 33805. E-mail: \href{mailto:arumpf@floridapoly.edu}{\tt arumpf@floridapoly.edu}}}
\date{November 18, 2021}
\begin{document}

\maketitle

\begin{abstract}
We consider a linear relaxation of a generalized minimum-cost network flow problem with binary input dependencies. In this model the flows through certain arcs are bounded by linear (or more generally, piecewise linear concave) functions of the flows through other arcs. This formulation can be used to model interrelated systems in which the components of one system require the delivery of material from another system in order to function (for example, components of a subway system may require delivery of electrical power from a separate system). We propose and study randomized rounding schemes for how this model can be used to approximate solutions to a related mixed integer linear program for modeling binary input dependencies. The introduction of side constraints prevents this problem from being solved using the well-known network simplex algorithm, however by characterizing its basis structure we develop a generalization of network simplex algorithm that can be used for its {computationally} efficient solution.
\end{abstract}

%==============================================================================

% Section: Introduction
\section{Introduction}
\label{sec:intro}

As society continues its trend towards urbanization it also becomes more reliant on highly interconnected civil infrastructure systems, which include services such as transportation, electrical power, telecommunications, water, and waste management. Interconnections arise when the functionality of one system relies on {the delivery of resources from} another, for example the reliance of telephone lines and subway systems on the {delivery of power from the electrical grid in order to function}. These interdependencies make the analysis of such systems much more complicated, and cause them to become more vulnerable to both attack and natural disaster due to the possibility of \textit{cascading failures}.

A cascading failure occurs when the failure of one component of a system causes another component {which depends on the functionality of the first component} (possibly in a different system) to also fail. This may in turn cause {a third component (depending on the second component) to fail, and then a fourth component, and so on}, resulting in a chain reaction that causes damage far beyond the initial failures. This phenomenon has been the subject of a great deal of recent research incorporating a wide variety of modeling techniques \cite{ouyang2014}. For example, Kinney et al.~2005 \cite{kinney2005} studied the North American power grid by examining the effects of removing random substations, which may result in a chain reaction of substation overloads that damages a large portion of the grid. Ouyang and Due\~nas-Osorio~2011 \cite{ouyang2011} examined different strategies for designing the interfaces between different infrastructure systems in an effort to maximize resistance to random natural disaster scenarios, Ouyang and Wang~2015 \cite{ouyang2015} examined different interdependent network restoration strategies, and Ouyang~2017 \cite{ouyang2017} examined worst-case scenarios as the results of directed attacks. Zhong et al.~2019 \cite{zhong2019} studied the repair process of various interdependent networks with load-dependent cascading failures. It should also be mentioned that the interdependent network paradigm has been used to study many other types of systems in recent years, for example in Wang et al.~2014 \cite{wang2014b}, which used interdependent networks to study evolutionary games for the resolution of social dilemmas.

Many interdependent network studies focus purely on topological aspects of the networks, drawing from random graph theory and percolation theory to examine how damage can propagate through a collection of interconnected networks. Gao et al.~2012 \cite{gao2012} showed that reducing the number of interdependencies between networks could reduce their susceptibility to attack. Parandehgheibi and Modiano~2013 \cite{paran2013}, Nguyen et al.~2013 \cite{nguyen2013}, and Sen et al.~2014 \cite{sen2014} all studied the interaction between an electrical power network and a communication network by finding optimal strategies for causing as much damage propagation as possible by removing as few nodes as possible. Havlin et al.~2014 \cite{havlin2014} showed results for various attack strategies on various types of random network. Lam and Tai~2018 \cite{lam2018} used fuzzy set theory to model networks with uncertain interdependencies.

Rather than focusing purely on the network topology, many other studies of interdependent infrastructure systems have instead made use of network flows models \cite{ouyang2009}, which have long been in use for other common civil infrastructure problems like transportation design \cite{farahani2013}. Under this modeling paradigm each infrastructure system is thought of as transporting a flow of material through a collection of nodes and arcs, with the flows and the network components representing different things in different systems. For example, in part of the transportation network, flow might represent cars, arcs roads, and nodes intersections, while in the water network, flow might represent water, arcs pipelines, and nodes water treatment facilities and homes. This model allows design and recovery problems to be stated as \textit{minimum-cost network flow} (MCNF) problems \cite[pp.~4--5]{networkflows}.

Interdependencies can be introduced into the standard MCNF problem by including side constraints in addition to the usual flow conservation and and capacity constraints. Lee et al.~2007 \cite{lee2007} put forward a model for describing these relationships. This paper described a type of interdependence called \textit{input dependence}, in which a component of one infrastructure system requires delivery of resources from another system in order to function (for example, the relationship between the electrical power network and the subway network described above). Input dependence can be modeled by causing an arc in one network (the \textit{child}) to become unable to transport flow unless a demand node in another network (the \textit{parent}) receives its full flow demand. Instead of strictly enforcing that all demand be satisfied as in the standard MCNF, shortfall is allowed but is penalized by adding a penalty cost to the objective and by causing other components of the network to fail. This type of constraint has since been incorporated into other models, notably by Cavdaroglu et al., whom used it as part of their disaster recovery scheduling model \cite{cavdaroglu2010} to study a series of similar problems involving interdependent infrastructure systems \cite{cavdaroglu2013,nurre2012,nurre2014,sharkey2015}. A similar notion of input dependence has been used in other network flows-based models \cite{almoghathawi2019,holden2013}.

The modeling technique used by Lee et al.~included a binary indicator variable for each parent/child pair with logical constraints to force its value to 0 if the parent's demand was not fully met and 1 otherwise. This variable was used as a multiplier for the child arc's capacity constraint, effectively leaving its capacity unchanged if the parent's demand was satisfied and setting it to 0 otherwise. The use of binary variables can cause the resulting \textit{mixed integer linear program} (MILP) to become computationally intractable for large civil infrastructure systems.

%%%
\paragraph{{Motivation and contributions}}

The purpose of this paper is to examine the \textit{linear program} (LP) relaxation of the Lee et al.~input dependence MILP model obtained by replacing the binary indicator variables with real-valued variables on the interval $[0,1]$. In fact we study a generalization of this relaxation, which we will refer to as the \textit{minimum-cost network flow problem with linear interdependencies} (\mbox{MCNFLI}), {in which the child's capacity may be bounded by {\sl any} linear function of the parent's inflow. The main contribution of this paper is to formulate this novel model and to study its methodological and algorithmic properties. Most importantly this includes characterizing the basis of the underlying LP, which enables the development of a generalized network simplex algorithm \cite[pp.~402--460]{networkflows} that can be used for its efficient solution.}

{We also explore applications of the \mbox{MCNFLI} model as a modeling and algorithmic tool. This includes its ability to describe relationships outside the scope of the original binary input dependence model and its use as part of an approximation algorithm for the MILP model. This usage is particularly important for larger multi-staged optimization models that include an underlying interdependent flow network, such as the disaster recovery scheduling models of Cavdaroglu et al.~cited above. The solution algorithms for such models often involve an iterative approach in which the underlying network flows problem must be solved repeatedly many times over, in which case the computational savings of the LP relaxation over the MILP model may be greatly multiplied. Moreover, pure LPs possess some important mathematical properties that MILPs lack (such as strong duality \cite[p.~148]{linopt}), which can allow the LP relaxation to be used in formulating far more computationally efficient non-iterative approximation algorithms for the overall model. This application of the \mbox{MCNFLI} is explored further in Rumpf~2020 \cite{rumpfthesis}, which uses the \mbox{MCNFLI} model to develop several approximation algorithms for solving an interdependent network interdiction game.}

%%%
\paragraph{Structure of this paper}

In Section \ref{sec:formulation} we describe the formulation of the \mbox{MCNFLI} model by generalizing the LP relaxation of the binary input dependence model from Lee et al.~2007 and the motivation underlying its formulation. {Section \ref{sec:applications} discusses an application wherein the solution of the \mbox{MCNFLI} model is combined with a variety of randomized rounding schemes to build an approximate solution for the computationally intractable MILP model. Having illustrated the usefulness of solutions of the \mbox{MCNFLI} model, the remainder of the paper is dedicated to the development of an efficient solution algorithm for the \mbox{MCNFLI}.} In Section \ref{sec:structure} we explore the structure of the \mbox{MCNFLI} and derive some important theoretical results, including the main result of this paper: the characterization of the basic feasible solution. These results are then applied in Section \ref{sec:simplex} to formulate a generalized network simplex algorithm. Finally we conclude in Section \ref{sec:conclusion} with a summary of results and a discussion of future work.

% Section: Model Formulation
\section{Model Formulation and Motivation}
\label{sec:formulation}

{The purpose of this section is to give a general formulation for the \mbox{MCNFLI} model to be used in developing the main theoretical results of this study.} The \mbox{MCNFLI} consists of an MCNF with side constraints. Since any collection of interdependent networks can be merged into a single network using artificial arcs with high cost, for the remainder of the paper {without loss of generality} we consider a {single} flow network $G = (V,E)$ with node set $V$ and arc set $E$. For each node $i \in V$ we have a supply value $b_i$ which is positive for supply nodes, negative for demand nodes, and zero for transshipment nodes. We assume that $\sum_{i \in V} b_i = 0$. For each arc $ij \in E$ we have a constant upper bound $u_{ij} \in [0,\infty]$, a unit flow cost of $c_{ij}$, and a nonnegative flow variable of $x_{ij}$.

We also have a set $I$ of interdependencies. Rather than the node-to-arc interdependencies described in Section \ref{sec:intro}, we will use arc-to-arc interdependencies in which one arc acts as the parent of a different arc. This formulation can be obtained from a node-to-arc interdependence using a transformation similar to that of the minimum-cost formulation of the maximum flow problem \mbox{\cite[pp.~430--433]{linopt}}, with a parent arc being saturated corresponding to a parent node's demand being met. Each element of $I$ is an ordered pair $(ij,kl) \in E \times E$ whose first element represents a parent arc and whose second element represents the corresponding child arc. {Without loss of generality we assume a one-to-one correspondence between parents and children, and that each arc appears in at most one interdependence, since many-to-one interdependencies, one-to-many interdependencies, and mutually interdependent pairs of arcs can all be modeled as arrangements of interdependent arcs in series. For each interdependence $(ij,kl) \in I$ we are also given constants $\alpha_{ij}^{kl}$ and $\beta_{ij}^{kl}$, satisfying $\alpha_{ij}^{kl} x_{ij} + \beta_{ij}^{kl} \ge 0$ for all $x_{ij} \in [0,u_{ij}]$,} which define the linear interdependence. The resulting model, which we will refer to as {the \textit{linear input dependence model} (\mbox{LIDM})}, is
\begin{alignat}{2}
	\label{eqn:mcnfliobjective} \min \quad& \sum_{ij \in E} c_{ij} x_{ij} \span\span \\
	\label{eqn:mcnflitransship} \mathrm{s.t.} \quad& \sum_{j : ij \in E} x_{ij} - \sum_{j : ji \in E} x_{ji} = b_i &\qquad& \forall i \in V \\
	\label{eqn:mcnflilowerflow} & 0 \le x_{ij} \le u_{ij} && \forall ij \in E \\
	\label{eqn:mcnflilinking} & x_{kl} \le \alpha_{ij}^{kl} x_{ij} + \beta_{ij}^{kl} && \forall (ij,kl) \in I
\end{alignat}

{The LIDM represents one particular formulation of the MCNFLI.} Note that (\ref{eqn:mcnfliobjective})--(\ref{eqn:mcnflilowerflow}) are exactly the objective and constraints of the MCNF. The new side constraints (\ref{eqn:mcnflilinking}) describe the interdependencies, bounding the flow through a child arc $x_{kl}$ by a linear function $\alpha_{ij}^{kl} x_{ij} + \beta_{ij}^{kl}$ of its corresponding parent arc $x_{ij}$.

The {MILP} from Lee et al.~2007, which we will refer to as {the \textit{binary input dependence model} (\mbox{BIDM})}, also takes the form of the MCNF with side constraints. It makes use of a binary linking variable $y_{ij}^{kl}$ for each interdependence $(ij,kl) \in I$. Additional constraints and slack variables force $y_{ij}^{kl} = 0$ when $x_{ij} < u_{ij}$ and $y_{ij}^{kl} = 1$ when $x_{ij} = u_{ij}$, allowing $y_{ij}^{kl}$ to act as an indicator of whether the parent is saturated. The interdependence, itself, is then enforced by a constraint of the form $x_{kl} \le u_{kl} y_{ij}^{kl}$, forcing the child's flow to be bounded by $u_{kl}$ if the parent is saturated and 0 otherwise.

The LP relaxation of {the \mbox{BIDM}} involves allowing the linking variables $y_{ij}^{kl}$ to take values in the interval $[0,1]$, rather than only the set $\{0,1\}$. Doing so allows the linking variables to be eliminated via substitution. The resulting linking constraint takes the form $x_{kl} \le \frac{u_{kl}}{u_{ij}} x_{ij}$, which is the special case of {the \mbox{LIDM}} for which $\alpha_{ij}^{kl} = \frac{u_{ij}}{u_{kl}}$ and $\beta_{ij}^{kl} = 0$ for all $(ij,kl) \in I$. For this reason, throughout the remainder of the paper we will refer to {the \mbox{LIDM}} as the LP relaxation of {\mbox{BIDM}}.

Linear input dependencies have several modeling applications of their own beyond simply approximating binary input dependencies. While binary input dependence renders a child arc completely unusable if there is any shortfall at its parent arc, linear input dependence still permits partial functionality from partial delivery, which may be more appropriate for modeling certain systems. In addition, by placing a sequence of parents and children in series, linear input dependencies as modeled in {\mbox{the LIDM}} can be used to generate a piecewise linear concave bound on the child's capacity as a function of the parent's saturation, which can model input dependencies in which the delivery of more material has diminishing returns.

% Section: Applications
\section{Approximate Mixed Integer Solutions}
\label{sec:applications}

Although, as discussed above, a linear input dependency model like {the \mbox{LIDM}} is {useful} in its own right, in this section we will investigate how solutions of {the \mbox{LIDM}} can be applied to find good approximations to the solutions of {the \mbox{BIDM}}. As {the \mbox{BIDM}} is NP-complete it is not in general computationally tractable to solve large instances of it exactly {(particularly if being used as a sub-model that must be solved repeatedly within a larger problem)}, however its linear relaxation {in the form of the \mbox{LIDM}} can be solved quickly and then those solutions can be used to build a solution for {the \mbox{BIDM}}. {In this section we describe a family of randomized rounding algorithms for obtaining a near-optimal, feasible solution to {the \mbox{BIDM}} based on its linear relaxation. We then conduct computational experiments to demonstrate that these schemes typically generate high-quality approximations within a small number of attempts for a large number of test cases, particularly for networks with relatively few interdependencies.}

%==============================================================================
\subsection{Randomized Rounding Algorithms}
\label{subsubsec:rr}

{In this section we describe the randomized rounding approximation algorithms which will be the subject of the remainder of Section \ref{sec:applications}, as well as explaining why a randomized rounding approach was chosen.} In {the \mbox{BIDM}} the binary linking variable $y_{ij}^{kl}$ for each interdependent pair $(ij,kl) \in I$ either forces the parent arc to be saturated (when $y_{ij}^{kl} = 1$) or the child arc to be unused (when $y_{ij}^{kl} = 0$). A common approximation technique for such a program with binary variables would be to solve the linear relaxation (restricting $y_{ij}^{kl}$ to $[0,1]$ instead of $\{0,1\}$) and then deterministically round the relaxed binary variables to 1 if above some threshold or 0 if below some threshold, fixing the rounded variables and then finally solving the resulting pure LP. Unfortunately there is no deterministic {threshold} that could be applied in this case that would guarantee the existence of a feasible solution. As an alternative we consider the related idea of randomized rounding algorithms.

Let $\mathbf{x^{\boldsymbol{*}}}$ be the optimal flow for the LP relaxation of {the \mbox{BIDM}}. For each interdependence $(ij,kl) \in I$ we set $y_{ij}^{kl}$ to 1 with probability $P_{ij}^{kl}$ and 0 otherwise, where $P_{ij}^{kl}$ describes the relative saturation of either the parent or the child. If the resulting binary variables define an infeasible LP then the random binary variables can be re-realized, repeating as necessary until a feasible solution is found. As it would be reasonable to base the value of $y_{ij}^{kl}$ on the saturation of either the parent or the child, we define three basic types of randomized rounding algorithm: \textit{randomized rounding based on child flow} ({\mbox{$\textsc{RR-Child}$}}) in which $P_{ij}^{kl} := x_{kl}^*/u_{kl}$, \textit{randomized rounding based on parent flow} ({\mbox{$\textsc{RR-Parent}$}}) in which $P_{ij}^{kl} := x_{ij}^*/u_{ij}$, and the control test \textit{fair randomized rounding} ({\mbox{$\textsc{RR-Fair}$}}) in which $P_{ij}^{kl} := 0.5$.

There is a danger that such a randomized rounding algorithm may not terminate regardless of how many times the random variables are realized. This can occur in certain cases where $P_{ij}^{kl}$ is allowed to equal exactly 0 or 1, as shown in the examples in Appendix \ref{app:rrfailure}. To prevent this we define modified versions of the {\mbox{$\textsc{RR-Child}$}} and {\mbox{$\textsc{RR-Parent}$}} schemes with alternate definitions for $P_{ij}^{kl}$. {For any $\epsilon \in [0,0.5]$, let \mbox{$\textsc{RR-Child}(\epsilon)$} be equivalent to \mbox{$\textsc{RR-Child}$} described above, but with its value of $P_{ij}^{kl}$ restricted to the interval $[\epsilon,1-\epsilon]$ (that is, equal to $\max\{\min\{x_{kl}^*/u_{kl},1-\epsilon\},\epsilon\}$) in order to avoid probabilities too close to 0 or 1. For the purposes of our study we will explore the special cases of \mbox{$\textsc{RR-Child}(0.00)$}, which is equivalent to \mbox{$\textsc{RR-Child}$} as described above, as well as \mbox{$\textsc{RR-Child}(0.01)$} and \mbox{$\textsc{RR-Child}(0.05)$}, which restrict $P_{ij}^{kl}$ to the intervals $[0.01,0.99]$ and $[0.05,0.95]$, respectively. We define \mbox{$\textsc{RR-Parent}(\epsilon)$} similarly to \mbox{$\textsc{RR-Child}(\epsilon)$}, with $P_{ij}^{kl} = \max\{\min\{x_{ij}^*/u_{ij},1-\epsilon\},\epsilon\}$.}

Finally we note some simple theoretical bounds. Clearly the LP relaxation of {the \mbox{BIDM}} provides a lower bound for its optimal value, and any feasible solution to {the \mbox{BIDM}} obtained from a randomized rounding scheme provides an upper bound. Unfortunately in general these bounds may not be very tight, and there exist example networks for which they are arbitrarily far from the true optimum (see Appendix \ref{app:boundexample}). For this reason theoretical bounds are not very useful in evaluating the LP relaxation and randomized rounding approximations, and so in order to learn more about the typical errors in these approximations we turn to empirical results obtained through computational trials.

{In order to constitute reasonable approximation methods we should expect that the LP relaxation and the randomized rounding schemes produce typically similar objective values to those of the {\mbox{BIDM}} solution, and that the randomized rounding schemes typically find a feasible solution in a small number of iterations. Moreover, in order to justify the utility of the LP relaxation in finding a feasible solution to the {\mbox{BIDM}} we should expect all {\mbox{$\textsc{RR-Child}$}} and {\mbox{$\textsc{RR-Parent}$}} schemes to typically produce higher-quality solutions than the {\mbox{$\textsc{RR-Fair}$}} scheme, as the {\mbox{$\textsc{RR-Child}$}} and {\mbox{$\textsc{RR-Parent}$}} schemes require an LP relaxation pre-solve while the {\mbox{$\textsc{RR-Fair}$}} scheme does not. It is also reasonable to expect that all approximation methods perform better at lower interdependence densities, as fewer interdependencies leads to fewer relaxed constraints. These expectations were substantiated through use of the computational experiments explained below.}

%==============================================================================
\subsection{Computational Trials}
\label{subsec:computational}

{The approximation methods described above were empirically tested on a large number of random interdependent networks. Each network was solved once as a MILP by treating its interdependencies as binary, once as an LP by treating its interdependencies as linear, and then once again as a MILP using each of the randomized rounding schemes (based on the LP solution). For each solution method the objective value was recorded, and for the randomized rounding schemes the number of realizations required to first achieve a feasible MILP solution was recorded. In order to evaluate the effect of the interdependence density as well as the network's size, arc density, and interdependence structure on these results, several network parameters were varied between problem sets.}

We used a modified version of NETGEN \cite{netgen}, a random MCNF problem generator, to create our test cases, adding a procedure to generate pairs of interdependent arcs alongside the standard MCNF problem parameters and used these to define instances of {the \mbox{BIDM}}, as well as a preliminary check to ensure that all problem instances were feasible. We tested two main types of problem: those in which parent arcs corresponded to demand nodes, and those in which parent arcs were distributed randomly throughout the network. We will refer to these problems as {\textsc{Structured-Type}} and {\textsc{Unstructured-Type}}, respectively. {{\textsc{Structured-Type}} problems represent a more realistic interdependence structure that might be seen in infrastructure networks, while {\textsc{Unstructured-Type}} problems assume no particular interdependence structure in order to evaluate the models' behaviors in a more general setting.} For {\textsc{Structured-Type}} problems, we applied a transformation to convert parent demand nodes into equivalent parent arcs, relaxing the demand values at these nodes as well as the supply values. The full source code for the computational trial generation can be viewed online \cite{p1code}.

For each problem type, we generated test networks on 256 and 512 nodes. For each test network the number of arcs was either 4 or 8 times the number of nodes. For {\textsc{Structured-Type}} problems we randomly selected either 2\%, 5\%, 10\%, or 15\% of the demand nodes as parents, with their corresponding children being chosen uniformly at random from the arc set. For {\textsc{Unstructured-Type}} problems we randomly selected either 1\%, 2\%, 5\%, or 10\% of the arcs within the network as parents and children.

For each combination of parameters, 60 test cases were generated. In all cases, 20\% of the nodes were sources and 20\% were sinks (in {\textsc{Structured-Type}} problems, this refers to the original number of sink nodes, before some were converted into transshipment nodes during the parent arc transformation). Each arc's cost was chosen uniformly at random from the interval $[1,100]$, with its capacity chosen from $[100,500]$ and with 100\% of {the NETGEN} skeleton arcs having maximum cost. The total supply was 10,000 per 256 nodes. The total number of test cases generated over all parameter combinations listed above was 1960.

Each test began by solving the binary input dependence MILP for the current network using CPLEX. Next the LP relaxation was solved using CPLEX, and the flow values from the optimal solution were recorded. Finally each of the seven randomized rounding schemes described above was executed in turn, using the flow values from the LP solution to determine the probability of setting each binary variable to 0 or 1. During each attempt a random binary vector was generated according to the rules of the current scheme, the resulting constraints were added to the MILP formulation, and the resulting problem was evaluated with CPLEX. If infeasible, a new random binary vector was generated from the same probability distribution and the process was repeated. If no feasible solution was found after {an arbitrarily-chosen cutoff of} 1000 attempts then the attempt was labeled as a failure and the next scheme was tested.

%==============================================================================
\subsection{Computational Results}
\label{subsubsec:results}

In this section we summarize the results of the computational trials described above. Full data tables can be found in Appendix \ref{app:tables}, {while the raw data can be viewed online \cite{p1data}}. Across all groups of trials the {\mbox{$\textsc{RR-Child}$}} and {\mbox{$\textsc{RR-Parent}$}} schemes performed so similarly that only the {\mbox{$\textsc{RR-Child}$}} and {\mbox{$\textsc{RR-Fair}$}} results will be discussed here.

%%%
\paragraph{LP Relaxation}

For the LP relaxation the primary value of interest is the relative error {(as a relative difference between the MILP and LP objective values)}, which was extremely small across all trials. Among the {\textsc{Structured-Type}} trial sets all showed a mean error of less than 0.2\% with a maximum of less than 1.7\%. More than half of the {\textsc{Structured-Type}} trial errors were exactly zero. Among the {\textsc{Unstructured-Type}} trials all mean errors were less than 1.7\% with a maximum of less than 6.3\% and a median of less than 0.2\%. {In addition, within each {\textsc{Unstructured-Type}} trial set, increasing the interdependence density for a fixed node and arc count resulted in an increased mean relative error. No clear trend was displayed among the {\textsc{Structured-Type}} trials.}

%%%
\paragraph{Randomized Rounding, {\textsc{Structured-Type}} Problems}

For the randomized rounding schemes there are two values of interest: the relative error and the number of iterations required to reach a feasible solution. For {\textsc{Structured-Type}} problems, all randomized rounding schemes except for one were able to find a feasible solution after a single iteration. The one exception occurred on a test network with 512 nodes, 4 arcs per node, and 10\% of sinks acting as interdependencies, for which {\mbox{$\textsc{RR-Child}(0.00)$}} failed to find a solution within 1000 iterations, although the remaining randomized rounding schemes all succeeded.

\begin{figure}[h]
	\centering
	{\includegraphics[height=0.22\textwidth]{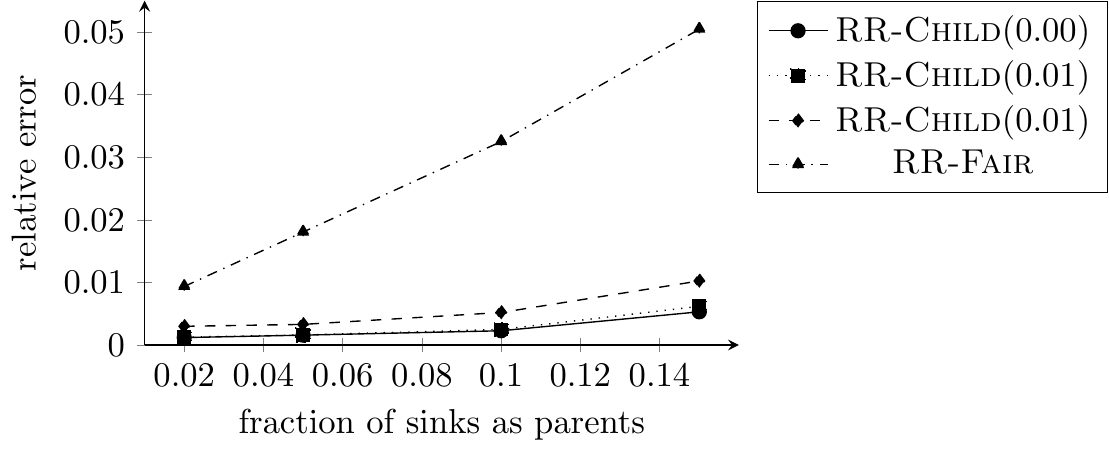}}
	\caption{Relative error in randomized rounding solutions for {\textsc{Structured-Type}} trials. Each point displays mean relative error versus interdependence density (fraction of sink nodes converted into parent arcs), calculated from the 60 {\textsc{Structured-Type}} trials on networks with 512 nodes and 4 arcs per node, ignoring the single trial for which {\mbox{$\textsc{RR-Child}(0.00)$}} failed to find a feasible solution. {All schemes display an increase in error as the interdependence density increases, though all {\mbox{$\textsc{RR-Child}$}} schemes displayed significantly smaller errors than the {\mbox{$\textsc{RR-Fair}$}} scheme.}}
	\label{fig:rrgapnode}
\end{figure}

Figure \ref{fig:rrgapnode} shows the mean relative errors for the three {\mbox{$\textsc{RR-Child}$}} schemes and the single {\mbox{$\textsc{RR-Fair}$}} scheme applied to {\textsc{Structured-Type}} problem instances, although all other {\textsc{Structured-Type}} test groups showed a similar trend. All schemes show a general trend of the mean error increasing as the interdependence density increases, with the largest errors occurring for the networks with the lowest arc density. The {\mbox{$\textsc{RR-Child}(0.00)$}} scheme achieved the smallest relative error for most trial groups followed very closely by the {\mbox{$\textsc{RR-Child}(0.01)$}} scheme, with all relative errors being less than 6.5\%. The {\mbox{$\textsc{RR-Child}(0.05)$}} trials showed larger errors, but still all less than 9.4\%. The {\mbox{$\textsc{RR-Fair}$}} trials were by far the largest, with a maximum error of approximately 18.7\%.

%%%
\paragraph{Randomized Rounding, {\textsc{Unstructured-Type}} Problems}

{\textsc{Structured-Type}} problems require relaxing some demand constraints while {\textsc{Unstructured-Type}} problems do not, making it more difficult to obtain a feasible solution. Figure \ref{subfig:rrtarc} shows the fraction of each type of trial in which each randomized rounding scheme failed to find a feasible solution within 1000 attempts. As expected, in most cases failure rates increase as the number of interdependencies increases, since this introduces additional side constraints and makes it more difficult to find a feasible solution. Other network sizes displayed less of a clear trend for small numbers of interdependencies, but in all cases the largest failure rates coincided with the largest number of interdependencies.

\begin{figure}[h]
	\centering
	\subcaptionbox{Randomized rounding scheme failure rates for {\textsc{Unstructured-Type}} trials.\label{subfig:rrtarc}}{ {\includegraphics[height=0.22\textwidth]{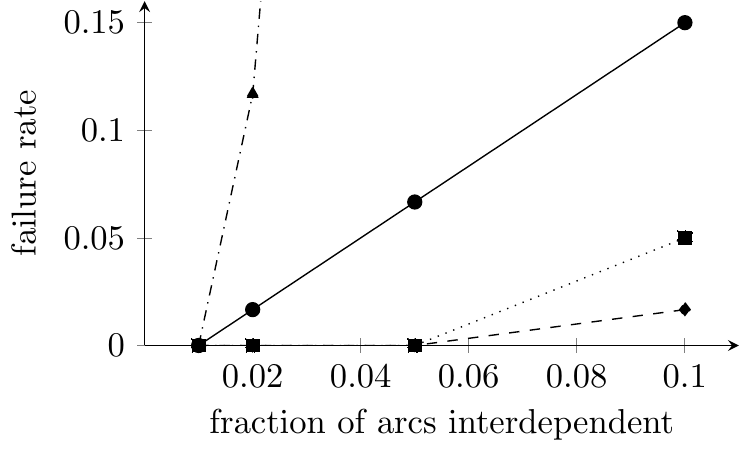}}} \hspace{0.25in}
	\subcaptionbox{Randomized rounding relative errors for {\textsc{Unstructured-Type}} trials. Statistics restricted only to successful trials.\label{subfig:rrgaparc}}{ {\includegraphics[height=0.22\textwidth]{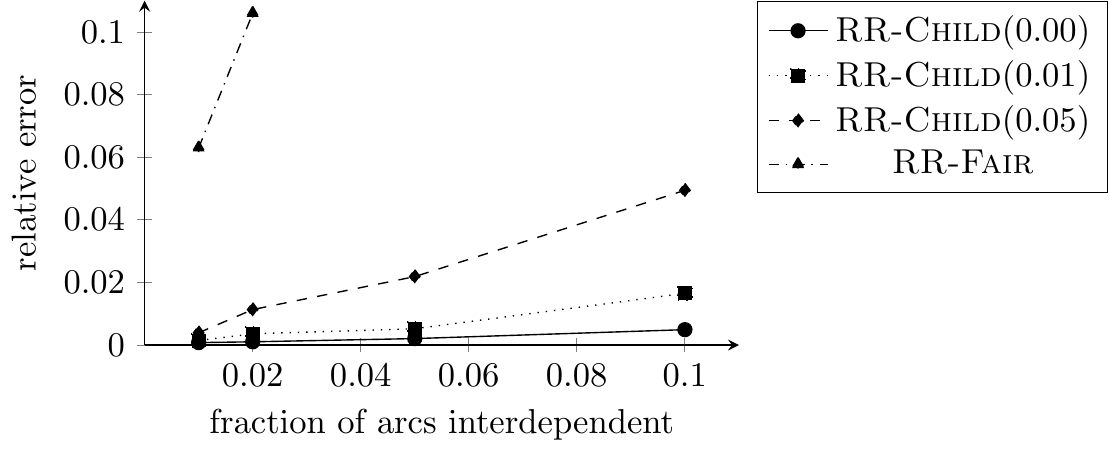}}}
	\caption{Randomized rounding schemes for {\textsc{Unstructured-Type}} trials. Statistics calculated from the 60 {\textsc{Unstructured-Type}} trials on networks with 512 nodes and 4 arcs per node. The {\mbox{$\textsc{RR-Fair}$}} series' have been cut off after the first two iterations. {Both the failure rates and the relative errors increased as the interdependence density increased, with {\mbox{$\textsc{RR-Fair}$}} performing far worse than all {\mbox{$\textsc{RR-Child}$}} schemes in both regards. Among the {\mbox{$\textsc{RR-Child}$}} schemes, those with the least randomization displayed the highest failure rates but the lowest relative errors on successful trials, indicating a tradeoff between chance of success and quality of approximation.}}
	\label{fig:rrctgap}
\end{figure}

More important is the comparison between the failure rates of the different randomized rounding schemes. In all trial groups except for one (256 nodes, 4 arcs per node, and 2\% of arcs interdependent) the {\mbox{$\textsc{RR-Fair}$}} scheme showed by far the largest failure rate, followed by {\mbox{$\textsc{RR-Child}(0.00)$}}, with {\mbox{$\textsc{RR-Child}(0.01)$}} and {\mbox{$\textsc{RR-Child}(0.05)$}} showing by far the lowest failure rates. For the trial groups in which 10\% of arcs were interdependent, all {\mbox{$\textsc{RR-Child}$}} failure rates fell below 15\% while all {\mbox{$\textsc{RR-Fair}$}} failure rates were above 95\%. Among the {\mbox{$\textsc{RR-Child}$}} schemes, as seen in Figure \ref{subfig:rrtarc}, {\mbox{$\textsc{RR-Child}(0.00)$}} failed significantly more often than {\mbox{$\textsc{RR-Child}(0.01)$}} and {\mbox{$\textsc{RR-Child}(0.05)$}}.

Figure \ref{subfig:rrgaparc} shows the mean for the relative error of each randomized rounding scheme for {\textsc{Unstructured-Type}} problems, restricted only to instances for which a feasible solution was obtained within 1000 attempts. The trend appears extremely similar to that of the {\textsc{Structured-Type}} trials. All mean errors increase with the number of interdependencies. In all cases {\mbox{$\textsc{RR-Fair}$}} produces by far the largest error, followed by {\mbox{$\textsc{RR-Child}(0.05)$}}, and then {\mbox{$\textsc{RR-Child}(0.01)$}} and finally {\mbox{$\textsc{RR-Child}(0.00)$}}. While {\mbox{$\textsc{RR-Child}(0.01)$}} and {\mbox{$\textsc{RR-Child}(0.00)$}} performed almost identically in the {\textsc{Structured-Type}} trials there is a more pronounced difference in the {\textsc{Unstructured-Type}} trials, for which {\mbox{$\textsc{RR-Child}(0.00)$}} produces clearly smaller errors than {\mbox{$\textsc{RR-Child}(0.01)$}}. Across all trial sets the largest mean error for the {\mbox{$\textsc{RR-Child}(0.05)$}} solutions was approximately 16.8\%, while for {\mbox{$\textsc{RR-Child}(0.01)$}} it was 4.1\% and for {\mbox{$\textsc{RR-Child}(0.00)$}} it was 0.8\%.

%==============================================================================
\subsection{Discussion}
\label{sub:discussion}

{The computational results of the previous section largely confirm our expectations for the approximation algorithms as explained in Section \ref{subsubsec:rr}.} The LP relaxation and the resulting randomized rounding solutions typically produce reasonable bounds for the MILP solution. The mean relative error of the LP relaxation had a mean of less than 1.7\% for all trial groups. For {\textsc{Structured-Type}} problems, with one exception the {\mbox{$\textsc{RR-Child}(0.00)$}} scheme was able to find a feasible solution at a relative error of less than 6.5\%, while {\mbox{$\textsc{RR-Child}(0.01)$}} achieved nearly the same accuracy with a better success rate. For {\textsc{Unstructured-Type}} problems the {\mbox{$\textsc{RR-Child}(0.00)$}} scheme achieved the smallest relative errors and the {\mbox{$\textsc{RR-Child}(0.01)$}} scheme achieved the smallest failure rate, but both performed well especially for the smallest trials. {In all trial sets, all {\mbox{$\textsc{RR-Child}$}} and {\mbox{$\textsc{RR-Parent}$}} schemes achieved significantly smaller errors and higher success rates than the {\mbox{$\textsc{RR-Fair}$}} scheme, which further justifies the utility in solving {an instance of the \mbox{LIDM}} while approximating the solution to {the \mbox{BIDM}}. However, these results also show that the error and failure rates of the approximation methods increase with the interdependence density. While this was expected, it also implies that the use of the randomized rounding algorithms should be limited to networks with relatively low interdependence density. That being said, in spite of the upward trend, the relative error in the LP relaxation was so small across all trial sets that it is still a useful approximation method even for networks with relatively large interdependence densities, which motivates the development of an efficient solution algorithm for {the \mbox{LIDM}}.}

The remainder of the paper will be dedicated to developing and proving the correctness of a generalized version of network simplex capable of solving an arbitrary instance of the \mbox{MCNFLI}. This problem is technically a special case of the MCNF problem with side constraints studied in Mathies and Mevert~1998 \cite{mathies1998}, but we can exploit the structure of our side constraints to define a simpler solution algorithm. Similar algorithms have been studied in Calvete~2003 \cite{calvete2003} for the general equal flow problem and in Bah\c{c}eci and Feyzio\~glu~2012 \cite{bahceci2012} for the proportional flow problem.

% Section: Problem Structure and Basis
\section{Problem Structure and Basis}
\label{sec:structure}

In this section we present some of the notation and theoretical results that will be used throughout the rest of the paper. We prove some important properties of the matrix form that will be used in Section \ref{sec:simplex} to develop a modified network simplex algorithm, most importantly the characterization of a basis for the \mbox{MCNFLI} in Section \ref{sec:basis}. Before moving on it will be useful to express {the \mbox{LIDM}} in matrix form. To this end we introduce a slack variable $s_{ij}^{kl}$ for each $(ij,kl) \in I$ and rewrite the linking constraint (\ref{eqn:mcnflilinking}) as the equality
\begin{align}
	\label{eqn:mcnflilinking2} x_{kl} - \alpha_{ij}^{kl} x_{ij} + s_{ij}^{kl} = \beta_{ij}^{kl}
\end{align}

Let $m = |V|$, $n = |E|$, and $p = |I|$, and assume without loss of generality that $n > m > p$. Let $\mathbf{x} \in \mathbb{R}^n$ be the vector of flow variables, $\mathbf{s} \in \mathbb{R}^p$ be the vector of slack variables, $\mathbf{c} \in \mathbb{R}^n$ be the vector of unit flow costs, $\mathbf{u} \in \mathbb{R}^n$ be the vector of arc capacities, $\mathbf{b} \in \mathbb{R}^m$ be the vector of supply values, $\boldsymbol{\beta} \in \mathbb{R}^p$ be the vector of linear inequality constants, and $\mathbf{\hat{A}} \in \mathbb{R}^{m \times n}$ be the node-arc incidence matrix of $G$ \cite[p.~277]{linopt}. Let $\mathbf{\hat{Q}} \in \mathbb{R}^{p \times n}$ be a matrix with one column for each arc $ij \in E$ and one row for each interdependence $(ij,kl) \in I$. Within the row corresponding to interdependence $(ij,kl)$, element $ij$ is $-\alpha_{ij}^{kl}$, element $kl$ is $1$, and all other elements are zero. Then the block matrix form of {the \mbox{LIDM}}, which we will refer to as {the \mbox{LIDM-B}}, is
\begin{align}
	\label{eqn:matobjective} \min \quad& \mathbf{c}' \mathbf{x} \\
	\label{eqn:matequality} \mathrm{s.t.} \quad& \begin{bmatrix}
		\mathbf{\hat{A}} & \mathbf{0} \\
		\mathbf{\hat{Q}} & \mathbf{I}
	\end{bmatrix}
	\begin{bmatrix}
		\mathbf{x} \\
		\mathbf{s}
	\end{bmatrix}
	=
	\begin{bmatrix}
		\mathbf{b} \\
		\boldsymbol{\beta}
	\end{bmatrix} \\
	& \mathbf{0} \le \mathbf{x} \le \mathbf{u} \\
	\label{eqn:matlowerslack} & \mathbf{0} \le \mathbf{s}
\end{align}

where $\mathbf{c}'$ denotes the transpose of $\mathbf{c}$, $\mathbf{I}$ is a $p \times p$ identity matrix, and $\mathbf{0}$ is a matrix or vector of zeros of the appropriate dimensions. Let $\mathbf{A} \in \mathbb{R}^{(m+p) \times (n+p)}$ be the matrix on the lefthand side of (\ref{eqn:matequality}), which constitutes the constraint matrix of {the \mbox{LIDM-B}}.

%==============================================================================
\subsection{Terminology}
\label{subsec:notation}

{In this section we define some terminology that will be used throughout the remainder of the paper.} For brevity we will use the terms ``arc'', ``flow variable'', and ``column'' interchangeably to refer to an arc, an arc's associated flow variable, and a flow variable's associated column of $\mathbf{A}$. Similarly we will use the terms ``node'', ``flow conservation constraint'', and ``row'' interchangeably to refer to a node, a node's associated flow conservation constraint, and a constraint's associated row of $\mathbf{A}$.

During each iteration of simplex, let $B$ denote the set of basic variables. For such $B$, let $\mathbf{B}$ be the submatrix of $\mathbf{A}$ containing only basic columns. Let $L$ and $U$ be the sets of nonbasic variables at their lower or upper bound, respectively. Let a variable be called \textit{interdependent} if it is involved in any interdependence as either a parent arc, child arc, or slack variable, and \textit{independent} otherwise. For each interdependence $(ij,kl) \in I$, we say that flow variables $x_{ij}$ and $x_{kl}$ and slack variable $s_{ij}^{kl}$ are all \textit{linked} to each other. We call arcs $ij$ and $kl$ \textit{partners} of each other. For any interdependent arc $ij$, let $\hat{a}_{ij}$ be defined as the coefficient of variable $x_{ij}$ in its corresponding linking constraint (\ref{eqn:mcnflilinking2}), which is $-\alpha_{ij}^{kl}$ if $ij$ is a parent and 1 if it is a child.

% Section: Basis Characterization
\subsection{Basis Characterization}
\label{sec:basis}

{In this section we prove the main result of the paper, the characterization of the basis of {the \mbox{LIDM-B}}, which will require first establishing some preliminary observations regarding the problem's structure.} A basis of {the \mbox{LIDM-B}} consists of a set $B$ of basic variables whose corresponding columns of $\mathbf{A}$ are full-rank, plus a partition of all nonbasic variables into sets $L$ and $U$. We begin by observing the rank of $\mathbf{A}$.

\begin{observation}
	\label{obs:rankofa} The coefficient matrix $\mathbf{A}$ of $\textup{{the \mbox{LIDM-B}}}$ has rank $m+p-1$.
\end{observation}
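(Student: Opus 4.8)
The plan is to compute the rank of
\[
\mathbf{A} = \begin{bmatrix} \mathbf{\hat{A}} & \mathbf{0} \\ \mathbf{\hat{Q}} & \mathbf{I} \end{bmatrix}
\]
by exploiting its block lower-triangular structure. Since the bottom-right block is the $p \times p$ identity matrix $\mathbf{I}$, its $p$ columns are linearly independent and are clearly independent of everything in the top $m$ rows (which are zero in those columns). Thus the rank of $\mathbf{A}$ decomposes cleanly as $p$ (from the slack columns) plus the rank of the node-arc incidence matrix $\mathbf{\hat{A}}$. So the whole problem reduces to showing $\operatorname{rank}(\mathbf{\hat{A}}) = m-1$, after which $\operatorname{rank}(\mathbf{A}) = (m-1) + p = m+p-1$ follows immediately.

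To make the block argument rigorous I would perform column operations: for each slack column, use its single $1$ entry in the identity block to clear out the corresponding entry of $\mathbf{\hat{Q}}$ in that row (these operations do not change rank and do not touch the $\mathbf{\hat{A}}$ block). After clearing, the matrix is block-diagonal with blocks $\mathbf{\hat{A}}$ and $\mathbf{I}$, whence $\operatorname{rank}(\mathbf{A}) = \operatorname{rank}(\mathbf{\hat{A}}) + p$. Equivalently, one can simply note that a maximal independent set of columns can be taken as all $p$ identity columns together with a maximal independent set of columns from $\mathbf{\hat{A}}$; the identity columns are the unique columns with support in the last $p$ rows, so no column from the $\mathbf{\hat{A}}$ block can contribute to independence within those rows.

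The remaining step, $\operatorname{rank}(\mathbf{\hat{A}}) = m-1$, is the classical fact about node-arc incidence matrices of a connected graph. The standard argument has two parts. First, the rank is at most $m-1$: every column of $\mathbf{\hat{A}}$ has exactly one $+1$ and one $-1$ entry (corresponding to the head and tail of an arc), so the all-ones row vector $\mathbf{1}'$ satisfies $\mathbf{1}' \mathbf{\hat{A}} = \mathbf{0}'$, giving a nontrivial left-null dependence among the $m$ rows and hence rank at most $m-1$. Second, the rank is at least $m-1$: assuming $G$ is connected (which I would invoke as a standing assumption, since any collection of interdependent networks is merged into a single network and $\sum_{i} b_i = 0$ guarantees the relevant component structure), one exhibits $m-1$ independent columns by taking the columns corresponding to the arcs of any spanning tree of $G$. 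The incidence columns of a spanning tree are well known to be linearly independent; this can be seen by repeatedly peeling off a leaf of the tree, whose column contains the unique nonzero entry in its leaf-row, forcing its coefficient to zero in any dependence relation, and inducting.

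The main obstacle is not any single computation but rather pinning down the connectivity hypothesis. The rank equals $m-1$ only when $G$ is connected; with $k$ connected components the incidence rank would be $m-k$. I would therefore either state connectivity explicitly as a standing assumption (justified by the artificial-arc merging described in Section \ref{sec:formulation}) or remark that $\sum_{i \in V} b_i = 0$ together with feasibility lets us restrict attention to a single component without loss of generality. Once connectivity is fixed, the spanning-tree argument and the all-ones left-null vector together pin the rank of $\mathbf{\hat{A}}$ at exactly $m-1$, and the block-triangular reduction completes the proof.
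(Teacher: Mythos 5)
Your proposal is correct and follows essentially the same route as the paper: decompose $\mathbf{A}$ via its block-triangular structure, combine $\operatorname{rank}(\mathbf{\hat{A}}) = m-1$ with the rank-$p$ identity block, and conclude $m+p-1$; you simply fill in details the paper cites or asserts (the incidence-matrix rank fact and the explicit column operations). Your remark about connectivity is well taken but is already covered by the paper's standing assumptions that the networks are merged into a single network and that $G$ contains a spanning tree of independent arcs.
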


The proof of this follows from the fact that, as an $m \times n$ incidence matrix, submatrix $\mathbf{\hat{A}}$ has rank $m-1$ \cite[p.~280]{linopt} while the $p \times p$ identity matrix $\mathbf{I}$ in the lower right has rank $p$, and the block structure of $\mathbf{A}$ makes it clear that the last $p$ rows are linearly independent when taken together with any full-rank subset of the first $m$ rows. This tells us that our basis must always contain $m+p-1$ basic variables.

Our next goal is to find a graph-based characterization of this basis analogous to the spanning tree basis of the standard MCNF. We assume without loss of generality that $G$ contains at least one spanning tree made up entirely of independent arcs. As in the MCNF, the subgraph of basic arcs must contain only a single component.

\begin{observation}
	\label{obs:onecomponent} Given any basis $B$, the subgraph of $G$ containing only the arcs in $B$ must be connected.
\end{observation}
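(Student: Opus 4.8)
The plan is to reduce the statement to the classical connectivity argument for node--arc incidence matrices by exploiting the block structure of $\mathbf{A}$ together with the rank count from Observation~\ref{obs:rankofa}. Since any basis $B$ consists of exactly $m+p-1$ columns (by Observation~\ref{obs:rankofa}) whose rank is $m+p-1$, the basis submatrix $\mathbf{B}$ has full column rank; and as $\mathbf{B}$ has $m+p$ rows, its left null space has dimension exactly $(m+p)-(m+p-1)=1$. I would begin by exhibiting one explicit left null vector. Let $\mathbf{w}_0 = (\mathbf{1}_m, \mathbf{0}_p)$ assign weight $1$ to every node (flow conservation) row and $0$ to every interdependence row. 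Because each arc column of $\mathbf{A}$ restricts to a standard incidence column on the node rows (a single $+1$ and a single $-1$, summing to $0$), and each slack column vanishes on the node rows, we get $\mathbf{w}_0'\mathbf{A} = \mathbf{0}$, hence $\mathbf{w}_0'\mathbf{B} = \mathbf{0}$. Thus the one-dimensional left null space of $\mathbf{B}$ is spanned by $\mathbf{w}_0$.

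The main step is a proof by contradiction. Suppose the subgraph of $G$ on the basic arcs is disconnected; then the node set admits a partition $V = V_1 \sqcup V_2$ with both parts nonempty and no basic arc joining $V_1$ to $V_2$ (taking $V_1$ to be a single isolated node covers the case in which some node meets no basic arc). Define $\mathbf{w}_1$ to be the indicator of the node rows in $V_1$, with $0$ on all node rows of $V_2$ and on every interdependence row. I would then verify $\mathbf{w}_1'\mathbf{B} = \mathbf{0}$ column by column: for a basic arc column both endpoints lie in the same part (no crossing arcs), so its two incidence entries receive equal weight and cancel, while its interdependence-row entries are annihilated because $\mathbf{w}_1$ vanishes there; every basic slack column is supported only on interdependence rows and so is also annihilated. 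Since $V_1$ is a proper, nonempty subset of $V$, the vector $\mathbf{w}_1$ is not a scalar multiple of $\mathbf{w}_0$, so the left null space of $\mathbf{B}$ has dimension at least $2$. This contradicts the dimension count above, forcing $\mathbf{B}$ to be rank-deficient and hence not a basis. Therefore the basic arcs must form a connected spanning subgraph of $G$.

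The one delicate point --- and the reason the statement is not literally the textbook result --- is the presence of the interdependence rows together with the parent/child coefficients $-\alpha_{ij}^{kl}$ and $1$ (and the slack columns). The key device that neutralizes these is to search for the extra null vector only among weight vectors supported on the node rows: any such vector is orthogonal to the $\mathbf{\hat{Q}}$ block and to the identity block, so the computation collapses to the pure incidence-matrix calculation and the nonstandard coefficients never enter. I expect the only places requiring genuine care to be the bookkeeping for the isolated-node case and the confirmation that basic slack columns cannot spoil the cancellation; everything else follows the standard spanning-tree-basis pattern for the MCNF.
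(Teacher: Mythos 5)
Your proposal is correct and is essentially the paper's own argument in dual form: the paper sums the rows within each connected component's incidence block to produce a zero row per component and notes that the rank deficiency of $\mathbf{B}$ (namely $1$, by Observation~\ref{obs:rankofa}) permits only one such row, which is exactly your statement that the left null space of $\mathbf{B}$ is one-dimensional yet a disconnection would supply two independent left null vectors. The handling of the isolated-node case and of the $\mathbf{\hat{Q}}$ and slack blocks is sound, so no gap remains.
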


The proof of this relies on the fact that the basis matrix $\mathbf{B}$ can be rearranged into a block form with one block for each component of the basic subgraph. Each block is an incidence matrix and thus rank-deficient by 1, meaning that adding all rows corresponding to one component results in a zero row. Since $\mathbf{B}$ is rank-deficient by only 1, only one zero row should be possible, which implies that only one component is possible.

Unlike the standard MCNF it is possible for the \mbox{MCNFLI} basis to contain cycles of arcs, however no such cycle can consist entirely of independent arcs.

\begin{observation}
	\label{obs:nocycles} Given any basis $B$, the subgraph of $G$ containing only the independent arcs in $B$ must be free of cycles.
\end{observation}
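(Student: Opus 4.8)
The plan is to exploit the block structure of $\mathbf{A}$ together with the standard fact from MCNF theory that the incidence columns of the arcs forming a cycle are linearly dependent. The key observation is that an independent arc $ij$ appears in no interdependence, so its column of $\mathbf{A}$ has every entry in the lower $\mathbf{\hat{Q}}$-block equal to zero; that is, it is simply the node-arc incidence column of $ij$ padded with $p$ zeros. This is what distinguishes independent arcs from interdependent ones and makes the classical acyclicity argument go through for them.

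First I would assume, toward a contradiction, that the subgraph of independent basic arcs contains a cycle $C$. Traversing $C$ in a fixed direction, I would assign to each arc the coefficient $+1$ if its orientation agrees with the direction of traversal and $-1$ otherwise. Then I would recall that, restricted to the incidence matrix $\mathbf{\hat{A}}$, this signed combination of the cycle's columns vanishes, since at every node of $C$ exactly two cycle arcs are incident and they contribute canceling $\pm 1$ entries in that node's row.

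Next I would observe that because every arc of $C$ is independent, each corresponding column is identically zero throughout the lower $\mathbf{\hat{Q}}$-block. Hence the same signed combination yields the zero vector not merely in the top $m$ rows but in all $m+p$ rows of $\mathbf{A}$. This exhibits a nontrivial linear dependence among columns of $\mathbf{B}$, contradicting the requirement (via Observation \ref{obs:rankofa}) that the basic columns be linearly independent. Therefore no cycle of independent arcs can occur in $B$.

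The argument is essentially the textbook proof that an MCNF basis is acyclic, and the only real subtlety — which I expect to be the crux — is verifying that independence of the arcs is exactly what guarantees the lower block contributes nothing, so that a dependence confined to the incidence rows is in fact a dependence across the entire matrix. This also makes transparent why the analogous statement fails for interdependent arcs: such arcs carry nonzero entries in $\mathbf{\hat{Q}}$, so the cancellation need not propagate to the lower block, and cycles through them are consequently permitted in the basis.
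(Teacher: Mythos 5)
Your argument is correct and is exactly the adaptation the paper intends: it invokes the textbook MCNF acyclicity proof \cite[p.~283]{linopt}, and the only new ingredient is the one you isolate, namely that an independent arc's column is zero throughout the $\mathbf{\hat{Q}}$-block, so the signed cycle dependence among incidence columns extends to all $m+p$ rows of $\mathbf{A}$ and contradicts the linear independence of the basic columns. (One small note: that independence requirement comes from the definition of a basis rather than from Observation~\ref{obs:rankofa}, but this does not affect the argument.)
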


The proof of this is almost identical to that of the MCNF problem \cite[p.~283]{linopt}. Observations \ref{obs:onecomponent} and \ref{obs:nocycles}, combined, imply that the subgraph of basic independent arcs must form a spanning forest of $G$. Let a spanning forest with $k$ components be referred to as a \textit{$k$-spanning forest}.

For the remainder of this paper we will let $r$ represent the number of basic interdependent variables, which could include a combination of interdependent arcs and slack variables. Note that it is always the case that $r \ge p$ or else Observation \ref{obs:nocycles} would be violated. This allows us to define the structure of the basic independent arcs.

\begin{lemma}
	\label{lemma:forest} The basic independent arcs must form an $(r-p+1)$-spanning forest of $G$.
\end{lemma}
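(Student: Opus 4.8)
The plan is to prove this by a direct counting argument that combines the rank of $\mathbf{A}$ from Observation~\ref{obs:rankofa} with the acyclicity established in Observation~\ref{obs:nocycles}. The key idea is that we already know the basic independent arcs are cycle-free and span $G$, so determining the number of components of this spanning forest reduces to counting its edges; the edge count in turn follows immediately from knowing the total number of basic variables and how many of them are interdependent. In other words, all of the structural content has been deposited into the preceding observations, and the lemma is essentially a bookkeeping calculation on top of them.

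First I would fix a basis $B$ and let $t$ denote the number of basic \emph{independent} arcs, so that the basic variables split into $t$ independent arcs together with the $r$ interdependent variables (interdependent arcs and slack variables). By Observation~\ref{obs:rankofa} the matrix $\mathbf{A}$ has rank $m+p-1$, and since every basis consists of exactly $\operatorname{rank}(\mathbf{A})$ variables, we obtain $t + r = m + p - 1$, hence $t = m + p - 1 - r$. Next I would invoke Observations~\ref{obs:onecomponent} and~\ref{obs:nocycles}, which together establish that the basic independent arcs, regarded as a subgraph on the full vertex set $V$, form a spanning forest of $G$. The final step is to apply the elementary identity that a spanning forest on $m$ vertices with $t$ edges has exactly $m - t$ components. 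Substituting $t = m + p - 1 - r$ yields $m - t = r - p + 1$, so the basic independent arcs form an $(r-p+1)$-spanning forest, as claimed.

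I do not expect any single step to present a serious obstacle, since the argument is arithmetic once the structural facts are in hand; the difficulty lies entirely in getting the bookkeeping exactly right. The point requiring the most care is verifying that the identity $t + r = m + p - 1$ is exact, which depends on the facts that a basis contains precisely $\operatorname{rank}(\mathbf{A})$ variables and that the partition of the basic variables into independent arcs, interdependent arcs, and slack variables is both exhaustive and disjoint. The only other subtlety is to apply the edge--component relation to the forest on the \emph{entire} vertex set $V$, counting any node incident to no basic independent arc as its own singleton component; this is precisely what makes the subgraph a \emph{spanning} forest and guarantees that the component count equals $m - t$ rather than a count restricted to the non-isolated vertices.
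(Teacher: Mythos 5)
Your proof is correct and follows essentially the same route as the paper, which justifies the lemma by combining Observations \ref{obs:onecomponent} and \ref{obs:nocycles} with the fact (from Observation \ref{obs:rankofa}) that the basis contains exactly $m+p-1$ variables; you have simply made the edge--component bookkeeping explicit, including the correct treatment of isolated nodes as singleton components.
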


This follows from Observations \ref{obs:onecomponent} and \ref{obs:nocycles}. Then the collection of basic variables consists of a spanning forest of independent arcs, plus possibly some interdependent arcs linking the components of the forest, plus possibly some slack variables. For the remainder of the paper we will use $F = \{T_1,\dots,T_{r-p+1}\}$ to refer to the spanning $(r-p+1)$-forest of $G$ formed by the basic independent arcs, where $T_h, h=1,\dots,r-p+1$ are the subgraphs of $G$ that form the components of the forest.

Define matrix $\mathbf{D} \in \mathbb{R}^{r \times r}$ as
\begin{align}
	\label{eqn:ddef} \mathbf{D} := \left[ \begin{array}{c c c|c}
		\delta_1^1 & \cdots & \delta_{r_a}^1 & \\
		\vdots & \ddots & \vdots & \\
		\delta_1^{r-p} & \cdots & \delta_{r_a}^{r-p} & \\
		\hline & \mathbf{Q} & & \mathbf{\hat{I}}
	\end{array} \right]
\end{align}

where $r_a$ is the number of basic interdependent arcs, $r_s$ is the number of basic slack variables, $\mathbf{\hat{I}} \in \mathbb{R}^{p \times r_s}$ and $\mathbf{Q} \in \mathbb{R}^{p \times r_a}$ are the submatrices of $\mathbf{\hat{I}}$ and $\mathbf{\hat{Q}}$ containing only basic columns, and $\delta_{ij}^h$ is defined as
\begin{align}
	\label{eqn:deltadef} \delta_{ij}^h := \left\{
	\begin{array}{r l}
		1 & \text{if } i \in T_h \text{ and } j \notin T_h \\
		-1 & \text{if } i \notin T_h \text{ and } j \in T_h \\
		0 & \text{otherwise}
	\end{array}
	\right. \qquad \forall ij \in E, \forall h=1,\dots,r-p+1
\end{align}

This matrix proves to be the key for determining the rank of $\mathbf{B}$.

\begin{lemma}
	\label{lemma:characterization} The rank of the basis matrix $\mathbf{B}$ is $m+p-1$ if and only if matrix $\mathbf{D}$ as defined in (\ref{eqn:ddef}) has rank $r$.
\end{lemma}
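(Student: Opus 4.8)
The plan is to show that $\mathbf{B}$ has full column rank $m+p-1$ precisely when its kernel is trivial, and to match the kernel of $\mathbf{B}$ with that of $\mathbf{D}$. First I would order the basic columns as independent (forest) arcs, then interdependent arcs, then slack variables, writing $\mathbf{B}$ in the block form
\[
\mathbf{B} = \begin{bmatrix} \mathbf{A}_F & \mathbf{A}_J & \mathbf{0} \\ \mathbf{0} & \mathbf{Q} & \mathbf{\hat{I}} \end{bmatrix},
\]
where $\mathbf{A}_F$ is the $m \times (m-r+p-1)$ incidence submatrix of the forest $F$ and $\mathbf{A}_J$ is the $m \times r_a$ incidence submatrix of the basic interdependent arcs; the lower-left zero block records that independent arcs never appear in a linking constraint. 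By Lemma \ref{lemma:forest} the forest $F$ has $r-p+1$ components, so $\mathbf{A}_F$ has rank $m-(r-p+1)=m-r+p-1$, i.e.\ full column rank.

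Next I would study $\mathbf{B}\mathbf{z}=0$ with $\mathbf{z}=(\mathbf{z}_F,\mathbf{z}_a,\mathbf{z}_s)$ split conformally. The upper block gives $\mathbf{A}_F \mathbf{z}_F = -\mathbf{A}_J \mathbf{z}_a$ and the lower block gives $\mathbf{Q}\mathbf{z}_a + \mathbf{\hat{I}}\mathbf{z}_s = \mathbf{0}$. The key step invokes the classical fact that a vector lies in the column space of a forest incidence matrix if and only if its entries sum to zero over each component. Thus the upper equation is solvable in $\mathbf{z}_F$ if and only if, for every component $T_h$, the entries of $\mathbf{A}_J\mathbf{z}_a$ sum to zero over the nodes of $T_h$; since summing the incidence entries of an arc $ij$ over $T_h$ yields exactly $\delta_{ij}^h$, this condition is $\sum_{ij}\delta_{ij}^h (\mathbf{z}_a)_{ij}=0$ for each $h$.

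I would then dispose of the redundancy among these component conditions: because $F$ spans $G$, each node lies in exactly one component, so $\sum_{h}\delta_{ij}^h = 0$ for every arc, the $r-p+1$ conditions sum to zero, and only $r-p$ of them are independent. Dropping the last one leaves exactly the upper block of $\mathbf{D}$ acting on $\mathbf{z}_a$, which together with the lower equation $\mathbf{Q}\mathbf{z}_a+\mathbf{\hat{I}}\mathbf{z}_s=\mathbf{0}$ says precisely that $\mathbf{D}$ annihilates the stacked vector formed from $\mathbf{z}_a$ and $\mathbf{z}_s$. Since $\mathbf{A}_F$ has full column rank, $\mathbf{z}_F$ is uniquely determined by $\mathbf{z}_a$ and vanishes when $\mathbf{z}_a$ does; hence $\mathbf{B}\mathbf{z}=0$ admits a nontrivial solution if and only if $\mathbf{D}$ has a nontrivial kernel. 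As $\mathbf{D}$ is $r\times r$, this is equivalent to $\mathbf{D}$ having rank $r$, which is the stated equivalence.

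I expect the main obstacle to lie in the key step and the redundancy argument: carefully verifying that the column-space condition on $\mathbf{A}_J\mathbf{z}_a$ collapses exactly into the $\delta_{ij}^h$ rows, and justifying that exactly $r-p$ of the $r-p+1$ component equations survive so that the surviving rows coincide with the top block of $\mathbf{D}$ rather than merely resembling it. The remaining verification—that the kernel correspondence between $\mathbf{B}$ and $\mathbf{D}$ is faithful in both directions—follows routinely once uniqueness of $\mathbf{z}_F$ is established.
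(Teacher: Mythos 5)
Your argument is correct, but it reaches the conclusion by a genuinely different route than the paper. The paper works in the row space: it reorders $\mathbf{B}$ into a block form with one tree incidence block $\mathbf{T}_h$ per component, adds all rows of each $\mathbf{T}_h$ to its first row (which simultaneously zeroes that row in the tree columns and produces the entries $\delta_e^h$ in the interdependent columns), notes that the surviving rows $\mathbf{\tilde{T}}_h$ contribute rank $m-r+p-1$ on disjoint column sets, and so reduces the question to the rank of the remaining $(r+1)\times r$ submatrix, from which one redundant row is dropped to obtain $\mathbf{D}$. You instead work in the kernel: the classical fact that the column space of a forest incidence matrix consists exactly of the vectors summing to zero on each component converts solvability of $\mathbf{A}_F\mathbf{z}_F=-\mathbf{A}_J\mathbf{z}_a$ into precisely the $\delta$-row conditions, and full column rank of $\mathbf{A}_F$ then gives a bijection between $\ker\mathbf{B}$ and $\ker\mathbf{D}$. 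Both arguments ultimately rest on the same three structural facts --- rank deficiency of a tree incidence matrix by one, the identity $\delta_e^h=\sum_{i\in T_h}a_e^i$, and the redundancy of one of the $r-p+1$ component conditions --- and your handling of the redundancy (the functionals sum identically to zero, so dropping one does not change the solution set) is exactly what is needed. What your formulation buys is that the step the paper treats somewhat tersely, namely that the remaining rows must also be independent of the $\mathbf{\tilde{T}}_h$ rows, becomes automatic: once $\mathbf{z}_F$ is uniquely determined by $\mathbf{z}_a$, nothing further needs checking. What the paper's explicit row reduction buys is the transformed matrix itself, which is reused to derive the reduced characterization via $\mathbf{\hat{D}}$ in Appendix \ref{app:dhat}.
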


\begin{proof}
	We can reorder the columns and rows of any basis matrix $\mathbf{B}$ to obtain the block structure
	\begin{align}
		\label{eqn:bstructure} \mathbf{B} = \left[ \begin{array}{c|c|c|c|c|c}
			\mathbf{T}_1 & & & & \mathbf{B}_1 & \\
			\hline & \mathbf{T}_2 & & & \mathbf{B}_2 & \\
			\hline & & \ddots & & \vdots & \\
			\hline & & & \mathbf{T}_{r-p+1} & \mathbf{B}_{r-p+1} & \\
			\hline & & & & \mathbf{Q} & \mathbf{\hat{I}}
		\end{array} \right]
	\end{align}
	
	where $\mathbf{T}_h$ is the incidence matrix of $T_h$ and $\mathbf{B}_h$ is the submatrix of $\mathbf{B}$ whose columns correspond to interdependent and slack variables and whose rows correspond to the nodes of $T_h$. For notational convenience, for the remainder of the proof we will refer to arcs by a single index rather than the indices of both of their endpoints. Let the basic interdependent arcs be labeled with indices $e=1,\dots,r_a$ corresponding to the order of the columns in each submatrix $\mathbf{B}_h$ from (\ref{eqn:bstructure}).
	
	Consider one of the submatrices $\mathbf{T}_h$. As the incidence matrix of a tree, it has exactly one more row than it has columns and it is rank-deficient by exactly 1 \cite[p.~280]{linopt}. Summing all rows of $\mathbf{T}_h$ yields the zero vector. Transform $\mathbf{B}$ into an equivalent matrix by adding all rows corresponding to $\mathbf{T}_h$ to the first row of $\mathbf{T}_h$, for each tree $h$. After this addition, the element corresponding to the first row of $\mathbf{T}_h$ in column $e$ will become $\sum_{k \in T_h} a_e^k$, which is exactly $\delta_e^h$. The result is
	\begin{align}
		\label{eqn:btransformed} \mathbf{B} \sim \left[ \begin{array}{c|c|c|c||c c c||c}
			\mathbf{0}' & & & & \delta_1^1 & \cdots & \delta_{r_a}^1 & \bigstrut \\
			\hline \mathbf{\tilde{T}}_1 & & & & & \mathbf{\tilde{B}}_1 & & \bigstrut \\
			\hhline{=|=|=|=#===#=} & \mathbf{0}' & & & \delta_1^2 & \cdots & \delta_{r_a}^2 \bigstrut \\
			\hline & \mathbf{\tilde{T}}_2 & & & & \mathbf{\tilde{B}}_2 & & \bigstrut \\
			\hhline{=|=|=|=#===#=} & & \ddots & & & \vdots & & \bigstrut \\
			\hhline{=|=|=|=#===#=} & & & \mathbf{0}' & \delta_1^{r-p+1} & \cdots & \delta_{r_a}^{r-p+1} & \bigstrut \\
			\hline & & & \mathbf{\tilde{T}}_{r-p+1} & & \mathbf{\tilde{B}}_{r-p+1} & & \bigstrut \\
			\hhline{=|=|=|=#===#=} & & & & & \mathbf{Q} & & \mathbf{\hat{I}} \bigstrut
		\end{array} \right]
	\end{align}
	
	where $\mathbf{\tilde{T}}_h$ and $\mathbf{\tilde{B}}_h$ are exactly $\mathbf{T}_h$ and $\mathbf{B}_h$, respectively, with their first row removed. All matrices $\mathbf{\tilde{T}}_h$ are full-rank \cite[p.~281]{linopt}, and since they occupy disjoint sets of columns, the rows associated with $\mathbf{\tilde{T}}_h$ must all be linearly independent. Their total rank when taken together is $m-r+p-1$ ($m$ nodes, minus one deleted row for each of the $r-p+1$ trees). Then $\mathbf{B}$ has rank $m+p-1$ exactly when all remaining rows have rank $r$, and are linearly independent when taken together with the rows of $\mathbf{\tilde{T}}_h, h=1,\dots,r-p+1$. Since the first row associated with each matrix $\mathbf{T}_h$ is zero in (\ref{eqn:btransformed}) we need only examine the remaining rows for columns 1 through $r$. The submatrix defined by this subset is
	\begin{align}
		\label{eqn:bsub} \left[ \begin{array}{c c c|c}
			\delta_1^1 & \cdots & \delta_{r_a}^1 & \\
			\vdots & \ddots & \vdots & \\
			\delta_1^{r-p+1} & \cdots & \delta_{r_a}^{r-p+1} & \\
			\hline & \mathbf{Q} & & \mathbf{\hat{I}} \bigstrut
		\end{array} \right]
	\end{align}
	
	This dimensions of this matrix are $(r+1) \times r$, since it has one column for each basic interdependent variable and one row for each of the $r-p+1$ trees in the spanning forest, plus one for each of the $p$ interdependencies. Consider the effects of summing rows 1 through $r-p+1$. In column $e$, from our earlier conclusion that $\delta_e^h = \sum_{k \in T_h} a_e^k$, this is $\sum_{h=1}^{r-p+1} \delta_e^h = \sum_{h=1}^{r-p+1} \sum_{k \in T_h} a_e^k = \sum_{k \in V} a_e^k$, which is zero since all columns of an incidence matrix sum to zero. Because of this we may drop {any arbitrary} row without affecting the rank of this matrix. {Dropping the row corresponding to tree $T_{r-p+1}$} gives exactly the $r \times r$ matrix $\mathbf{D}$ defined earlier in (\ref{eqn:ddef}).
\end{proof}

Adopting the terminology of \cite{ahuja1999,bahceci2012,calvete2003}, we refer to an $(r-p+1)$-spanning forest of $G$ as a \textit{good $(r-p+1)$-forest} with respect to a set $S$ of $r$ non-independent variables if $\mathbf{D}$ is full-rank. Then the preceding results allow us to finally characterize the basis of {the \mbox{LIDM-B}}.

\begin{theorem}
	\label{thm:basisdef} A basis of \textup{{the \mbox{LIDM-B}}} consists of a set $B$ of $m+p-1$ basic variables, containing a good $(r-p+1)$-spanning forest of $G$ verified by a set $S$ of $r$ interdependent variables, plus an assignment of all nonbasic variables to a category $L$ or $U$, containing variables at their lower or upper bounds, respectively.
\end{theorem}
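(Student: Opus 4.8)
The plan is to recognize that Theorem~\ref{thm:basisdef} is an assembly of the structural facts established in Observations~\ref{obs:onecomponent} and \ref{obs:nocycles} and Lemmas~\ref{lemma:forest} and \ref{lemma:characterization}, repackaged in the language of good forests. Recall that, in the bounded-variable setting, specifying a basis means choosing a set $B$ of basic variables whose columns form a full-rank submatrix $\mathbf{B}$, together with a partition of the nonbasic variables into those held at their lower bound ($L$) and those held at their upper bound ($U$). By Observation~\ref{obs:rankofa} the rank of $\mathbf{A}$ is $m+p-1$, so $\mathbf{B}$ must have exactly $m+p-1$ columns and rank $m+p-1$; the whole task therefore reduces to translating the condition ``the rank of $\mathbf{B}$ is $m+p-1$'' into the stated combinatorial structure, which I would carry out as a biconditional.

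For necessity I would begin from an arbitrary basis $B$ and apply the earlier results verbatim: Lemma~\ref{lemma:forest} (resting on Observations~\ref{obs:onecomponent} and \ref{obs:nocycles}) forces the basic independent arcs to form an $(r-p+1)$-spanning forest $F$ of $G$, and the $r$ remaining basic variables are, by the definition of $r$, exactly the basic interdependent variables, which I take as the verifying set $S$. Since the rank of $\mathbf{B}$ is $m+p-1$, Lemma~\ref{lemma:characterization} shows $\mathbf{D}$ is full-rank, i.e.\ $F$ is good with respect to $S$. For sufficiency I would reverse the implication: given the stated structure, a count confirms the cardinality, since an $(r-p+1)$-spanning forest on $m$ nodes has $m-(r-p+1) = m-r+p-1$ arcs and $|S| = r$, so $|B| = (m-r+p-1)+r = m+p-1$; goodness means $\mathbf{D}$ has rank $r$, so by Lemma~\ref{lemma:characterization} the rank of $\mathbf{B}$ equals $m+p-1 = |B|$, hence the columns of $\mathbf{B}$ are linearly independent and $B$ is a basis.

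The $L/U$ partition requires no new work: once the $m+p-1$ basic variables are fixed the remaining $n-m+1$ variables are nonbasic, and in the bounded-variable simplex each is pinned at exactly one of its bounds, which is precisely what the assignment to $L$ or $U$ records. I do not expect a single dominant obstacle, since Lemma~\ref{lemma:characterization} has already absorbed the essential linear algebra; the only point demanding genuine care is the bookkeeping that identifies the $r$ basic interdependent variables with the verifying set $S$ and verifies that the forest-plus-$S$ cardinality is exactly $m+p-1$, so that full rank of the small matrix $\mathbf{D}$ can be upgraded to linear independence of the entire submatrix $\mathbf{B}$ rather than of some proper subset of its columns.
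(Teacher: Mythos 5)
Your proposal is correct and follows essentially the same route as the paper, which simply notes that the theorem ``follows directly from Lemma~\ref{lemma:characterization} and the definition of a basis''; you have filled in the implicit details (the cardinality count $(m-r+p-1)+r = m+p-1$, the appeal to Observation~\ref{obs:rankofa} and Lemma~\ref{lemma:forest}, and both directions of the equivalence) without changing the underlying argument. No gaps.
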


The proof of Theorem \ref{thm:basisdef} follows directly from Lemma \ref{lemma:characterization} and the definition of a basis. It should also be noted that the basis characterization presented in Theorem \ref{thm:basisdef} can be modified to depend on only a submatrix of $\mathbf{D}$, as shown in Appendix \ref{app:dhat}. {In either case the spanning forest element of the {the \mbox{LIDM-B}} basis makes it structurally similar to that of the standard MCNF basis. This enables us to develop a generalized network simplex algorithm for its solution, as will be explained in the next section.}

% Section: Modified Network Simplex
\section{Modified Network Simplex}
\label{sec:simplex}

{The network simplex algorithm is a well-known, computationally efficient, exact solution algorithm for the network flows problems \cite[pp.~402--421]{networkflows}. It is a specialized version of the simplex algorithm that takes advantage of the network structure of the MCNF, however the introduction of side constraints in the LIDM-B prevents network simplex from being applied directly. Fortunately the basis structure derived in the previous section is so similar to the spanning tree basis of a standard MCNF problem that the network simplex algorithm can be generalized to develop an efficient solution algorithm for the \mbox{MCNFLI}.}

This section will focus primarily on the elements of network simplex that need to be modified. {Our overall methodology in developing this generalization follows the general approach used by Calvete~2003 \cite{calvete2003} and by Bah\c{c}eci and Feyzio\~glu~2012 \cite{bahceci2012} in generalizing network simplex for the equal and proportional flows problems, respectively, though these problems are not equivalent to the MCNFLI and so their specific results cannot be directly applied. Among other changes, solving the LIDM-B requires further procedures to accommodate the non-network slack variables $s_{ij}^{kl}$ as well as casework to accommodate different types of basis changes.} In Section \ref{subsec:reducedcosts} we describe the modified reduced cost calculation procedure (which searches for candidates to enter the basis). In Section \ref{subsec:changeofbasis} we describe the modified change of basis procedure (consisting of pivoting to bring a candidate into the basis while another variable leaves). We conclude in Section \ref{subsec:complexity} with an analysis of of their computational complexity. See also Appendix \ref{app:byhand} for an illustrative numerical example of the algorithm carried out on a test network.

%==============================================================================
\subsection{Reduced Cost Calculation}
\label{subsec:reducedcosts}

The vector of reduced costs $\mathbf{c^{\boldsymbol{\pi}}} \in \mathbb{R}^{n+p}$ is defined by $(\mathbf{c^{\boldsymbol{\pi}}})' := \mathbf{c}' - \boldsymbol{\pi}' \mathbf{A}$, where $\boldsymbol{\pi} \in \mathbb{R}^{m+p}$ is the \textit{potential vector} \cite[p.~84]{linopt}. There is one reduced cost $c_{ij}^\pi$ for each flow variable $x_{ij}$ and one $(c^\pi)_{ij}^{kl}$ for each slack variable $s_{ij}^{kl}$. There is one potential $\pi_i$ for reach node $i \in V$ and one $\pi_{ij}^{kl}$ for each interdependence $(ij,kl) \in I$. Given our definitions for $\mathbf{c^{\boldsymbol{\pi}}}$ and $\mathbf{A}$, we have the relationships
\begin{alignat}{2}
	\label{eqn:rcindependent} c_{ij}^\pi &= c_{ij} - (\pi_i - \pi_j) &\qquad& \text{if $ij$ is independent} \\
	\label{eqn:rcparent} c_{ij}^\pi &= c_{ij} - (\pi_i - \pi_j - \alpha_{ij}^{kl} \pi_{ij}^{kl}) && \text{if $(ij,kl) \in I$} \\
	\label{eqn:rcchild} c_{ij}^\pi &= c_{ij} - (\pi_i - \pi_j + \pi_{kl}^{ij}) && \text{if $(kl,ij) \in I$} \\
	\label{eqn:rcslack} (c^\pi)_{ij}^{kl} &= -\pi_{ij}^{kl} && \text{if $(ij,kl) \in I$}
\end{alignat}

The reduced cost of a basic variable is zero, allowing us to use equations (\ref{eqn:rcindependent})--(\ref{eqn:rcslack}) to solve for the potentials by adopting a two-step strategy based on that of Calvete~2003 \cite{calvete2003} and Bah\c{c}eci and Feyzio\~glu~2012 \cite{bahceci2012}. First we make an initial guess $\boldsymbol{\pi}$ for the potential vector. To do this, within each tree $T_h$ of $F$, arbitrarily select a root and arbitrarily assign it a potential. Assign a potential of zero to all interdependencies of basic slack variables and arbitrarily assign a potential to all remaining interdependencies. Then equations (\ref{eqn:rcindependent}) can be used to solve for all remaining node potentials within each tree in order from root to leaf, since $c_{ij}^\pi = 0$ for all $ij \in T_h$.

Second, we test whether the guessed potentials were correct by testing whether they produce reduced costs of zero for all basic variables when substituted into (\ref{eqn:rcindependent})--(\ref{eqn:rcslack}). If so, then $\boldsymbol{\pi}$ is the correct potential vector and we can move on. If not, then we must calculate a ``corrected potential'' vector $\mathbf{\tilde{\boldsymbol{\pi}}}$ defined by
\begin{alignat}{2}
	\label{eqn:corrected1} \tilde{\pi}_i &:= \pi_i + \sigma_h &\qquad& \forall i \in T_h, \forall h=1,\dots,r-p \\
	\label{eqn:corrected3} \tilde{\pi}_{ij}^{kl} &:= \sigma_{ij}^{kl} && \forall (ij,kl) \in I
\end{alignat}

The vector $\boldsymbol{\sigma}$ of correction terms is defined as the solution of the linear system $\mathbf{D}' \boldsymbol{\sigma} = \mathbf{c^{\boldsymbol{\pi}}}$, which includes the reduced cost vector $\mathbf{c^{\boldsymbol{\pi}}}$ resulting from the initial guess $\boldsymbol{\pi}$.

\begin{proposition}
	\label{prop:dsigma} If $\boldsymbol{\sigma}$ is the solution to the system $\mathbf{D}' \boldsymbol{\sigma} = \mathbf{c^{\boldsymbol{\pi}}}$, then the vector of corrected potentials $\mathbf{\tilde{\boldsymbol{\pi}}}$ calculated by equations (\ref{eqn:corrected1})--(\ref{eqn:corrected3}) will yield a reduced cost of 0 for all basic variables calculated by equations (\ref{eqn:rcindependent})--(\ref{eqn:rcslack}).
\end{proposition}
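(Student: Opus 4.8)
The plan is to verify directly that, under the corrected potentials $\mathbf{\tilde{\boldsymbol{\pi}}}$, equations (\ref{eqn:rcindependent})--(\ref{eqn:rcslack}) return a reduced cost of zero for every basic variable, splitting the argument according to the three types of basic variable: basic independent arcs, basic slack variables, and basic interdependent arcs. The whole point of the construction is that the single linear system $\mathbf{D}' \boldsymbol{\sigma} = \mathbf{c^{\boldsymbol{\pi}}}$ should encode exactly the zero-reduced-cost conditions for the $r$ basic interdependent variables, so I expect the proof to reduce to reading off one row of that system for each such variable and matching it against the corresponding reduced-cost formula.

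First I would dispose of the basic independent arcs. Since these arcs form the spanning forest $F$ of Lemma \ref{lemma:forest}, both endpoints of such an arc lie in a single tree $T_h$, so by (\ref{eqn:corrected1}) their potentials are shifted by the common constant $\sigma_h$ (or left unchanged, in the undropped tree $T_{r-p+1}$). Hence the difference $\tilde{\pi}_i - \tilde{\pi}_j = \pi_i - \pi_j$ is preserved, and (\ref{eqn:rcindependent}) gives $\tilde{c}_{ij}^\pi = c_{ij}^\pi = 0$, the last equality holding because the initial guess was built precisely to zero out these reduced costs. Next, for a basic slack variable $s_{ij}^{kl}$ its column of $\mathbf{D}$ lies in the block $\mathbf{\hat{I}}$ and is a standard unit vector, so the matching row of $\mathbf{D}' \boldsymbol{\sigma} = \mathbf{c^{\boldsymbol{\pi}}}$ reads simply $\sigma_{ij}^{kl} = (c^\pi)_{ij}^{kl}$; since the initial guess assigns potential zero to the interdependencies of basic slacks, the right-hand side is $0$, forcing $\sigma_{ij}^{kl} = 0$, and then (\ref{eqn:rcslack}) together with (\ref{eqn:corrected3}) gives $(\tilde{c}^\pi)_{ij}^{kl} = -\tilde{\pi}_{ij}^{kl} = -\sigma_{ij}^{kl} = 0$.

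The main work is the basic interdependent arcs, which I would treat by expanding the corrected reduced cost and identifying the left-hand side of the matching row of $\mathbf{D}' \boldsymbol{\sigma} = \mathbf{c^{\boldsymbol{\pi}}}$. For a basic parent arc $ij$ of interdependence $(ij,kl)$ with $i \in T_{h_1}$ and $j \in T_{h_2}$, substituting (\ref{eqn:corrected1}) and (\ref{eqn:corrected3}) into (\ref{eqn:rcparent}) should produce $\tilde{c}_{ij}^\pi = c_{ij}^\pi - (\sigma_{h_1} - \sigma_{h_2}) + \alpha_{ij}^{kl} \sigma_{ij}^{kl}$, where the tree-shift term $\sigma_{h_1} - \sigma_{h_2}$ is exactly $\sum_{h=1}^{r-p} \delta_{ij}^h \sigma_h$ by the definition (\ref{eqn:deltadef}) of the coefficients $\delta_{ij}^h$ (with the convention $\sigma_{r-p+1} = 0$ inherited from the dropped tree row), and $\alpha_{ij}^{kl}$ is the negative of the $\mathbf{Q}$-entry $\hat{a}_{ij}$. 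The column of $\mathbf{D}$ for arc $ij$ thus carries entries $\delta_{ij}^h$ in the forest rows and $-\alpha_{ij}^{kl}$ in the interdependence row, so the associated row of $\mathbf{D}' \boldsymbol{\sigma} = \mathbf{c^{\boldsymbol{\pi}}}$ reads $\sum_{h=1}^{r-p} \delta_{ij}^h \sigma_h - \alpha_{ij}^{kl} \sigma_{ij}^{kl} = c_{ij}^\pi$; recognizing its left-hand side as exactly the quantity $(\sigma_{h_1} - \sigma_{h_2}) - \alpha_{ij}^{kl}\sigma_{ij}^{kl}$ subtracted off in the expansion gives $\tilde{c}_{ij}^\pi = c_{ij}^\pi - c_{ij}^\pi = 0$. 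The child case is identical except that (\ref{eqn:rcchild}) and the $\mathbf{Q}$-entry $\hat{a}_{kl} = 1$ replace their parent analogues.

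The step I expect to be the main obstacle is the careful index bookkeeping in this last case, in particular verifying that the shift term $\sigma_{h_1} - \sigma_{h_2}$ coming from the potential correction coincides with the dot product of the $\delta$-column of $\mathbf{D}$ with $\boldsymbol{\sigma}$ in every configuration — including when an interdependent arc has both endpoints in the same tree (both expressions vanish) and when one endpoint lies in the dropped tree $T_{r-p+1}$ (the corresponding $\sigma$ is absent, consistent with (\ref{eqn:corrected1}) correcting only trees $1,\dots,r-p$). I would also take care to confirm that the interdependence potentials of the initial guess contribute nothing spurious to the right-hand side $\mathbf{c^{\boldsymbol{\pi}}}$, so that the cancellation against the matched row of the linear system is exact.
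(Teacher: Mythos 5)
Your proposal is correct and follows essentially the same route as the paper's proof: independent arcs handled by the constant per-tree shift, basic slacks via the unit-vector rows of $\mathbf{D}'$ forcing $\sigma_{ij}^{kl} = (c^\pi)_{ij}^{kl} = 0$, and interdependent arcs by matching the expanded corrected reduced cost against the corresponding row of $\mathbf{D}' \boldsymbol{\sigma} = \mathbf{c^{\boldsymbol{\pi}}}$, including the convention $\sigma_{r-p+1} \equiv 0$ and the same-tree degenerate case. The bookkeeping points you flag as potential obstacles are exactly the ones the paper's proof works through, so no gap remains.
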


\begin{proof}
	Let $\boldsymbol{\sigma}$ be the solution to $\mathbf{D}' \boldsymbol{\sigma} = \mathbf{c^{\boldsymbol{\pi}}}$. From the first part of the two-step strategy described above it is clear that $\boldsymbol{\pi}$ results in $c_{ij}^\pi = 0$ for all $ij \in T_h$, and since the same constant offset $\sigma_h$ is applied to all node potentials in $T_h$, this remains the case after applying the correction terms. From the structure of $\mathbf{D}$ we find that the row of system $\mathbf{D}' \boldsymbol{\sigma} = \mathbf{c^{\boldsymbol{\pi}}}$ corresponding to interdependence $(ij,kl)$ is simply the equation $\sigma_{ij}^{kl} = (c^\pi)_{ij}^{kl}$. For a basic slack variable $s_{ij}^{kl}$ we have $(c^\pi)_{ij}^{kl} = 0$, in which case the corrected potential and thus the new reduced cost are all also zero.
	
	All that remains is to show that the corrected potentials also result in zero reduced cost for the basic interdependent arcs. For convenience define $\sigma_{r-p+1} \equiv 0$. For any basic parent arc $ij$ with $(ij,kl) \in I$, where $i \in T_h$ and $j \in T_g$, substituting the corrected potentials into the righthand side of (\ref{eqn:rcparent}) and simplifying gives $c_{ij}^\pi - (\sigma_h - \sigma_g - \alpha_{ij}^{kl} \sigma_{ij}^{kl})$, which is zero if and only if $\sigma_h - \sigma_g - \alpha_{ij}^{kl} \sigma_{ij}^{kl} = c_{ij}^\pi$. To show this, the row of $\mathbf{D}' \boldsymbol{\sigma}$ corresponding to arc $ij$ is $\sum_{q=1}^{r-p} \delta_{ij}^q \sigma_q - \alpha_{ij}^{kl} \sigma_{ij}^{kl}$. If $h \ne g$, then $\delta_{ij}^h$ is 1 for $q=h$, $\delta_{ij}^g = -1$, and $\delta_{ij}^q = 0$ for all other $q$, resulting in $\sum_{q=1}^{r-p} \delta_{ij}^q \sigma_q - \alpha_{ij}^{kl} \sigma_{ij}^{kl} = \sigma_h - \sigma_g - \alpha_{ij}^{kl} \sigma_{ij}^{kl}$. The same result holds even if $h = g$, since in this case $\delta_{ij}^h = \delta_{ij}^g = 0$ and $\sigma_h - \sigma_g = 0$. In either case, this row of $\mathbf{D}' \boldsymbol{\sigma}$ is set equal to $c_{ij}^\pi$, meaning that $\boldsymbol{\sigma}$ must satisfy $\sigma_h - \sigma_g - \alpha_{ij}^{kl} \sigma_{ij}^{kl} = c_{ij}^\pi$. As noted above, this implies that $\mathbf{\tilde{\boldsymbol{\pi}}}$ produces $c_{ij}^\pi = 0$ when substituted into (\ref{eqn:rcparent}). A similar argument holds for the basic child arcs, thus the overall claim holds.
\end{proof}

At the end of the two-step process the resulting corrected potential vector $\mathbf{\tilde{\boldsymbol{\pi}}}$ can be applied to equations (\ref{eqn:rcindependent})--(\ref{eqn:rcslack}) to compute the reduced cost of any nonbasic variable. As noted at the end of Section \ref{sec:basis}, there is a way to characterize the basis using only a submatrix of $\mathbf{D}$. This reduced version of $\mathbf{D}$ also gives rise to a reduced version of the above algorithm explained in Appendix \ref{app:dhat}. In addition, note that the full version of the above algorithm only needs to be used during the initial iteration. Between consecutive iterations of simplex, within most components of $F$ the previous potential vector will still satisfy (\ref{eqn:rcindependent}) after the change of basis. As a result the previous potential vector $\boldsymbol{\pi}$ can be used to calculate the correction vector $\boldsymbol{\sigma}$ without the need to recalculate all node potentials within each component of $F$.

To explain, if a component $T_h$ of $F$ remains unchanged between iterations (meaning that no arc within $T_h$ leaves the basis and no independent arc incident to a node in $T_h$ enters the basis), or if $T_h$ loses an arc and splits into two separate components, then all previous node potentials within $T_h$ still satisfy (\ref{eqn:rcindependent}). Only if an independent arc enters the basis, causing two components of $F$ to merge, can one of these equations become violated. In this event equations (\ref{eqn:rcindependent}) can be satisfied by increasing all node potentials within one of the merged components by an appropriate constant. If independent arc $ij$ with endpoints in separate components $T_h$ and $T_g$ of $F$, respectively, enters the basis, then either $(\pi_i - c_{ij}) - \pi_j$ can be added to all node potentials in $T_g$ or $(\pi_j + c_{ij}) - \pi_i$ can be added to all node potentials in $T_h$. All basic slack potentials can then be set to 0 as before, after which the correction vector can be calculated and used it to calculate the corrected potential vector.

In any case, if all reduced costs of variables in $L$ are nonnegative and all reduced costs of variables in $U$ are nonpositive, then the current basic solution is optimal and the algorithm may terminate. Otherwise we choose some variable in $L$ with a negative reduced cost or in $U$ with a positive reduced cost to enter the basis and conduct change of basis.

%==============================================================================
\subsection{Change of Basis}
\label{subsec:changeofbasis}

After deciding on a variable to enter the basis we conduct pivoting and update the solution vector. Before describing this process it will be helpful to first discuss how to calculate the values of the basic variables given a particular basis. {This may be a necessary step during the algorithm's first iteration starting from an initial basis, and it will also introduce some of the notation and results required to define the change of basis procedure in Section \ref{subsubsec:changeofbasis}.}

%%%
\subsubsection{Computing the Values of the Basic Variables}
\label{subsubsec:basicvalues}

{Our goal in this section is to derive a procedure to calculate the values of the basic variables given a valid basis $(B,L,U)$. For any such basis the} variables in $L$ and $U$ are fixed at their lower or upper bound, respectively, and the variables in $B$ must satisfy transshipment constraints (\ref{eqn:mcnflitransship}) and linking constraints (\ref{eqn:mcnflilinking2}). For $h=1,\dots,r-p+1$, define $E_h^- := \set{ij}{x_{ij} \in U, i \in T_h, j \notin T_h}$ and $E_h^+ := \set{ij}{x_{ij} \in U, i \notin T_h, j \in T_h}$.

Adopting the terminology of Calvete~2003 \cite{calvete2003}, for each tree $T_h$ of $F$ we define a \textit{net requirement} $b(T_h) := \sum_{i \in T_h} b_i + \sum_{ij \in E_h^+} u_{ij} - \sum_{ij \in E_h^-} u_{ij}$, which represents the net inflow to $T_h$ through nonbasic arcs and supply values. Let $\mathbf{\tilde{x}} \in \mathbb{R}^r$ be the vector of basic interdependent flow variables and all slack variables, arranged in the same order as the columns of $\mathbf{D}$. For interdependence $(ij,kl) \in I$ corresponding to row $t$ of $\mathbf{D}$, define its net requirement $b(t)$ as
\begin{align}
	\label{eqn:btdef} b(t) := \left\{
	\begin{array}{l l}
		\beta_{ij}^{kl} & \text{if } x_{ij}, x_{kl} \notin U \\
		\beta_{ij}^{kl} + \alpha_{ij}^{kl} u_{ij} & \text{if } x_{ij} \in U, x_{kl} \notin U \\
		\beta_{ij}^{kl} - u_{kl} & \text{if } x_{ij} \notin U, x_{kl} \in U \\
		\beta_{ij}^{kl} + \alpha_{ij}^{kl} u_{ij} - u_{kl} & \text{if } x_{ij}, x_{kl} \in U
	\end{array}
	\right.
\end{align}

Let $\mathbf{\tilde{b}} \in \mathbb{R}^r$ be a vector of the form $\left[ \begin{array}{c c c|c c c} b(T_1) & \hspace{-0.33em} \cdots \hspace{-0.33em} & b(T_{r-p}) & b(1) & \hspace{-0.33em} \cdots \hspace{-0.33em} & b(p) \end{array} \right]'$, containing all tree net requirement values $b(T_h)$ for $h=1,\dots,r-p$ followed by all interdependence net requirement values $b(t)$ for $t=1,\dots,p$. These net requirement values can be used to compute the values of the basic variables.

\begin{proposition}
	\label{prop:basicvar} Given a basis $(B,L,U)$, the values of the basic interdependent variables and the slack variables are given by the solution $\mathbf{\tilde{x}}$ of the system $\mathbf{D} \mathbf{\tilde{x}}= \mathbf{\tilde{b}}$.
\end{proposition}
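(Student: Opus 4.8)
The plan is to verify directly that the vector $\mathbf{\tilde{x}}$ solving $\mathbf{D}\mathbf{\tilde{x}} = \mathbf{\tilde{b}}$ satisfies all of the active constraints that determine the basic variables, namely the flow conservation constraints (\ref{eqn:mcnflitransship}) aggregated over each tree of $F$ and the linking equalities (\ref{eqn:mcnflilinking2}) for each interdependence. The key observation is that, with all nonbasic variables fixed at their bounds, the constraint system restricted to the basic variables is exactly $\mathbf{B}_B \mathbf{x}_B = \mathbf{\tilde{b}}_{\text{full}}$ for the appropriate right-hand side, and the row-summing reduction used in the proof of Lemma \ref{lemma:characterization} collapses this into the $r \times r$ system $\mathbf{D}\mathbf{\tilde{x}} = \mathbf{\tilde{b}}$. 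So the proof mirrors that earlier derivation, but now tracking the right-hand side rather than the coefficient matrix.

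First I would treat the flow conservation rows. For each tree $T_h$, summing the conservation constraints (\ref{eqn:mcnflitransship}) over all nodes $i \in T_h$ causes every independent basic arc internal to $T_h$ to cancel (it appears with $+1$ and $-1$), leaving only the contributions of arcs crossing the boundary of $T_h$. The nonbasic arcs at their upper bound contribute the fixed terms $\sum_{ij \in E_h^+} u_{ij} - \sum_{ij \in E_h^-} u_{ij}$, which together with $\sum_{i \in T_h} b_i$ are moved to the right-hand side to form exactly the net requirement $b(T_h)$. The surviving basic-variable coefficients on the left are precisely the $\delta_{ij}^h$ entries from (\ref{eqn:deltadef}), so this aggregated row is the $h$-th row of $\mathbf{D}\mathbf{\tilde{x}} = \mathbf{\tilde{b}}$. (As in Lemma \ref{lemma:characterization}, only $r-p$ of the $r-p+1$ aggregated tree rows are independent, which is why $\mathbf{\tilde{b}}$ records $b(T_1),\dots,b(T_{r-p})$ and the redundant tree $T_{r-p+1}$ is dropped.)

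Next I would handle the linking rows. For interdependence $(ij,kl)$, equation (\ref{eqn:mcnflilinking2}) reads $x_{kl} - \alpha_{ij}^{kl} x_{ij} + s_{ij}^{kl} = \beta_{ij}^{kl}$. Whenever the parent $x_{ij}$ or child $x_{kl}$ is nonbasic at its upper bound, its value $u_{ij}$ or $u_{kl}$ is substituted and transposed to the right-hand side, which is exactly the casework in definition (\ref{eqn:btdef}) producing $b(t)$; the remaining basic coefficients on the left are the entries of the bottom block $[\,\mathbf{Q}\mid\mathbf{\hat{I}}\,]$ of $\mathbf{D}$. Thus each linking row is also a row of $\mathbf{D}\mathbf{\tilde{x}} = \mathbf{\tilde{b}}$. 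Since $\mathbf{D}$ is full-rank $r$ by the good-forest hypothesis (Lemma \ref{lemma:characterization}), the system has a unique solution, which therefore gives the values of the basic variables.

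The main obstacle is purely bookkeeping rather than conceptual: one must be careful that every nonbasic variable fixed at a bound is accounted for with the correct sign when passed to the right-hand side, and that a nonbasic arc at its \emph{lower} bound (value $0$) contributes nothing, which is why only the $U$-membership appears in the definitions of $E_h^\pm$ and in (\ref{eqn:btdef}). Verifying that the surviving left-hand coefficients match $\mathbf{D}$ exactly — in particular that the aggregated conservation coefficients reproduce the $\delta_{ij}^h$ and that dropping the $T_{r-p+1}$ row loses no information — is where the correspondence with Lemma \ref{lemma:characterization} must be invoked carefully, but no new estimate or construction is needed.
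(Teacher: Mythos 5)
Your proposal is correct and follows essentially the same route as the paper's proof: aggregate the flow conservation constraints over each tree $T_h$ so that internal basic independent arcs cancel, move the fixed contributions of nonbasic arcs in $U$ to the right-hand side to recover $b(T_h)$ and the $\delta_{ij}^h$ coefficients, and observe that each linking equality with nonbasic values substituted is exactly a row $b(t)$ of the system. Your added remarks on the redundant tree row and on uniqueness via the full rank of $\mathbf{D}$ are consistent with (and slightly more explicit than) the paper's treatment.
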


\begin{proof}
	Suppose that $\mathbf{\tilde{x}}$ satisfies $\mathbf{D} \mathbf{\tilde{x}} = \mathbf{\tilde{b}}$. Our goal is to show that the values in $\mathbf{\tilde{x}}$ satisfy constraints (\ref{eqn:mcnflitransship}) and (\ref{eqn:mcnflilinking2}). Let $E_{\mathrm{ind}}$ be the set of basic independent arcs, $E_{\mathrm{int}}$ be the set of basic interdependent arcs, $\tilde{E}_{\mathrm{ind}}$ be the set of nonbasic independent arcs at their upper bound, and $\tilde{E}_{\mathrm{int}}$ be the set of nonbasic interdependent arcs at their upper bound. Define
	\begin{align}
		\label{eqn:bhatdef} \hat{b}_i := b_i - \sum_{j : ij \in \tilde{E}_{\mathrm{ind}}} u_{ij} + \sum_{j : ji \in \tilde{E}_{\mathrm{ind}}} u_{ji} - \sum_{j : ij \in \tilde{E}_{\mathrm{int}}} u_{ij} + \sum_{j : ji \in \tilde{E}_{\mathrm{int}}} u_{ji} \qquad \forall i \in V
	\end{align}
	
	This is the outflow from node $i$ required to satisfy (\ref{eqn:mcnflitransship}) after all nonbasic arc flows are taken into account. The basic flow variables must carry this amount of net outflow from node $i$, giving
	\begin{align}
		\label{eqn:bhat} \sum_{j : ij \in E_{\mathrm{ind}}} x_{ij} - \sum_{j : ji \in E_{\mathrm{ind}}} x_{ji} + \sum_{j : ij \in E_{\mathrm{int}}} x_{ij} - \sum_{j : ji \in E_{\mathrm{int}}} x_{ji} = \hat{b}_i \qquad \forall i \in V
	\end{align}
	
	For any tree $T_h$, consider the result of summing equations (\ref{eqn:bhat}) over all $i \in T_h$. All basic independent arcs have either both or neither endpoints in $T_h$, thus for all $ij \in E_{\mathrm{ind}} \cap T_h$ the terms $x_{ij}$ and $-x_{ij}$ will each appear in one equation, causing the two to cancel. The same cancellation occurs when summing equations (\ref{eqn:bhatdef}) over all $i \in T_h$. Equating the two gives
	\begin{align}
		\label{eqn:bhatsum} \sum_{i \in T_h} \left[ \sum_{j : ij \in E_{\mathrm{int}}} x_{ij} - \sum_{j : ji \in E_{\mathrm{int}}} x_{ji} \right] = \sum_{i \in T_h} \left[ b_i - \sum_{j : ij \in \tilde{E}_{\mathrm{int}}} u_{ij} + \sum_{j : ji \in \tilde{E}_{\mathrm{int}}} u_{ji} \right] \qquad \forall h=1,\dots,r-p
	\end{align}
	
	Both sides of (\ref{eqn:bhatsum}) represent the net outflow of an entire tree $T_h$ after taking nonbasic flows into account. The only arcs capable of transporting a variable amount of flow between trees are the basic interdependent arcs, and so equations (\ref{eqn:bhatsum}) define a necessary and sufficient condition for the basic interdependent flow variables to satisfy (\ref{eqn:mcnflitransship}). On the lefthand side, for any arc $ij \in E_{\mathrm{int}}$, the term $x_{ij}$ is present if $ij$ exits $T_h$, the term $-x_{ij}$ is present if $ij$ enters $T_h$, and otherwise no term of the form $x_{ij}$ is present. Then the lefthand side is equal to $\sum_{ij \in T_h} \delta_{ij}^h x_{ij}$. Similarly, on the righthand side, for any arc $ij \in \tilde{E}_{\mathrm{int}}$, the term $u_{ij}$ is present if $ij$ exits $T_h$, the term $-u_{ij}$ is present if $ij$ enters $T_h$, and otherwise no term of the form $u_{ij}$ is present. Then the righthand side is equal to $\sum_{i \in T_h} b_i - \sum_{ij \in E_h^-} u_{ij} + \sum_{ij \in E_h^+} u_{ij}$, which is exactly the definition of $b(T_h)$. Combining these two gives $\sum_{ij \in T_h} \delta_{ij}^h x_{ij} = b(T_h)$, which is row $h$ of system $\mathbf{D} \mathbf{\tilde{x}}= \mathbf{\tilde{b}}$.
	
	All that remains is to show that the slack variables in $\mathbf{\tilde{x}}$ satisfy constraints (\ref{eqn:mcnflilinking2}). Each linking constraint has the form $x_{kl} - \alpha_{ij}^{kl} x_{ij} + s_{ij}^{kl} = \beta_{ij}^{kl}$. If either flow variable is nonbasic then its value is either zero or its upper bound (depending on whether it is in $L$ or $U$), and placing the resulting constant value on the righthand side produces the net requirement $b(t)$ as defined in (\ref{eqn:btdef}). Then linking constraint $t$ is exactly row $t$ of the system $\mathbf{D} \mathbf{\tilde{x}}= \mathbf{\tilde{b}}$. Combining this with our earlier result we may conclude that $\mathbf{\tilde{x}}$ consists of the values of the basic interdependent variables and slack variables.
\end{proof}

After obtaining the values of all basic interdependent variables and slack variables, the values of the basic independent variables can be computed in the same way as in standard network simplex \cite[pp.~413--415]{networkflows} within each tree. With this procedure in mind, we are finally ready to describe the change of basis procedure.

%%%
\subsubsection{{Change of Basis Procedure}}
\label{subsubsec:changeofbasis}

{In this section we define a procedure for bringing the entering variable into the basis and choosing a variable to leave the basis.} We consider here only entering variables that begin at their lower bound, but an analogous procedure may be described for those at their upper bound. Pivoting consists of increasing the entering variable by $\theta$ units while some of the basic variables change in response in order to maintain feasibility. Assuming finite capacities, at some increase $\theta = \theta^*$ the entering variable or a basic variable will reach one of its bounds, becoming a \textit{blocking variable} (with infinite capacities there may be no blocking variable, in which case the algorithm can terminate with an objective value of $-\infty$). A standard pivoting rule \cite[p.~111]{linopt} can be used to select a blocking variable to leave the basis into $L$ or $U$ while the entering variable takes its place in $B$, at which point the structure of the basic forest $F$ is updated.

We will use $\Delta x_{ij}$ (and $\Delta s_{ij}^{kl}$) to refer to the increase in a given variable as a result of increasing the entering variable by $\theta$ units. Our main goal is to determine the vector of increases $\Delta \mathbf{x}$. The process for doing so depends on whether changing the entering variable causes flow to be transferred between different components of $F$.

\paragraph{Case 1} If increasing the entering variable causes no flow to be transferred between components of $F$, then pivoting can be conducted similarly to standard network simplex \cite[p.~284]{linopt}. This case can occur when the entering variable is an independent arc or an interdependent arc with a basic slack variable, and when both endpoints of the arc lie within the same tree, in which case $\theta$ units of flow are circulated around the unique cycle defined by the arc in the tree (and $\Delta s_{ij}^{kl} = -\hat{a}_{ij} \theta$ for the basic linked slack variable, if it exists). It can also occur when the entering variable is interdependent with a basic linked arc whose endpoints both lie in the same component of $F$, in which case additional flow is circulated around the unique cycle defined by the entering arc's partner. The amount of flow circulated is $-\theta/\hat{a}_{ij}$ if the entering variable is slack, $\alpha_{ij}^{kl} \theta$ if it is the parent of a child $kl$, and $\theta/\alpha_{kl}^{ij}$ if it is the child of a parent $kl$.

\paragraph{Case 2} If increasing the entering variable \textit{does} transfer flow between different components of $F$, we apply a two-step process: first determine the flow increases $\Delta \mathbf{\tilde{x}}$ of the basic interdependent variables, and then determine the flow increases of the basic independent arcs independently within each component of $F$. For the first step, if the entering variable is an arc $ij$ with $i \in T_h$, $j \in T_g$, and $h \ne g$, then increasing its flow effectively increases the net requirements $b(T_h)$ and $b(T_g)$ by $-\theta$ and $\theta$, respectively. If $ij$ is interdependent, then the value $b(t)$ associated with its interdependence is also increased by $-\hat{a}_{ij} \theta$. These changes in net requirements can be accommodated by solving the following perturbed version of the system $\mathbf{D} \mathbf{\tilde{x}} = \mathbf{\tilde{b}}$.
\begin{align}
	\label{eqn:xtildeperturbed} \mathbf{D} \mathbf{\tilde{x}}= \left[ \begin{array}{c c c c c c c|c c c c c} b(T_1) & \hspace{-0.33em} \cdots \hspace{-0.33em} & b(T_h)-\theta & \hspace{-0.33em} \cdots \hspace{-0.33em} & b(T_g)+\theta & \hspace{-0.33em} \cdots \hspace{-0.33em} & b(T_{r-p}) & b(1) & \hspace{-0.33em} \cdots \hspace{-0.33em} & b(t)-\hat{a}_{ij} \theta & \hspace{-0.33em} \cdots \hspace{-0.33em} & b(p) \end{array} \right]'
\end{align}

The instance of $\theta$ in row $t$ of the above system may be ignored if $ij$ is independent. If the entering variable is slack then we may ignore the instances of $\theta$ in rows $T_h$ and $T_g$ and replace $\hat{a}_{ij}$ with $-1$. In any case solving system (\ref{eqn:xtildeperturbed}) gives a vector $\mathbf{\tilde{x}}$ describing the values of the basic interdependent variables after the entering variable has increased by $\theta$ units, from which $\Delta \mathbf{\tilde{x}}$ can be calculated. These are then used in the second step to calculate net requirement increases for all $i \in V$, which can in turn be used to calculate basic independent arc increases using the procedure described in Section \ref{subsubsec:basicvalues} to process the arcs within each component of $F$.\\

In either case, upon completion we obtain a vector $\Delta \mathbf{x}$ of basic (and entering) variable increases. These can be used to calculate the maximum increase $\theta^*$ as well as the blocking variables as in network simplex \cite[p.~290]{linopt}. We then decide on a leaving variable and update the basis structure $(B,L,U)$, the solution vector $\mathbf{x}$, the basic forest $F$, and the matrix $\mathbf{D}$.

%==============================================================================
\subsection{Computational Complexity}
\label{subsec:complexity}

{As a specialized form of simplex, the modified network simplex algorithm defined in the previous sections} goes through exactly the same number of basis changes as {simplex}: exponentially many in the worst case, but empirically only polynomially many \cite[pp.~127--128]{linopt}. For this reason a more useful point of comparison is to measure the time complexity of a single iteration of both algorithms. Table \ref{table:complexity} shows the time complexities and memory requirements of two {standard implementations of simplex} \cite[p.~107]{linopt} alongside our modified network simplex algorithm, as applied to \mbox{{the \mbox{LIDM-B}}}.

\begin{table}[h]
	\centering
	\begin{tabular}{c c c}
		\hline & \textbf{Memory Requirement} & \textbf{Worst-Case Time} \\
		\hline \textbf{Simplex (Tableau)} & $O((m+p)(n+p))$ & $O((m+p)(n+p))$ \\
		\textbf{Simplex (Revised)} & $O((m+p)^2)$ & $O((m+p)(n+p))$ \\
		\textbf{Modified Network Simplex} & $O(m+n+p^2)$ & $O(m+n+p^3)$ \\
		\hline
	\end{tabular}
	\caption{Memory requirements and worst-case time complexities (in number of elementary operations) for the full tableau implementation of simplex, revised simplex, and our modified network simplex algorithm, applied to the $(m+p) \times (n+p)$ system in \mbox{{the \mbox{LIDM-B}}}.}
	\label{table:complexity}
\end{table}

The memory requirement for our algorithm is $O(m+n+p^2)$, dominated by the $O(m)$ node-level attributes, $O(n)$ arc-level attributes, and $O(p^2)$ elements of $\mathbf{D}$. The time complexity is $O(m+n+p^3)$, dominated by the $O(m)$ operations to set node potentials and net requirements, $O(n)$ operations to set flow values, and $O(p^3)$ operations to solve systems $\mathbf{D}' \boldsymbol{\sigma} = \mathbf{c^{\boldsymbol{\pi}}}$ and $\mathbf{D} \mathbf{\tilde{x}} = \mathbf{\tilde{b}}$ by Gaussian elimination. For the applications of the \mbox{MCNFLI} described earlier in Section \ref{sec:applications} it is reasonable to assume that a relatively small portion of the network's arcs are interdependent. {In particular if $p$ is $O({n}^{1/3})$ then both the tableau and the revised implementations of} simplex have memory and time requirements that are quadratic in $m$ and $n$, while our specialized algorithm's requirements are only linear in $m$ and $n$. As with {network simplex}, this allows it to become significantly more efficient than standard simplex for large networks \cite[p.~287]{linopt}.

% Sections: Conclusion, Future Work, and Acknowledgments
\section{Conclusion and Future Work}
\label{sec:conclusion}

The \mbox{MCNFLI} model introduced in Section~\ref{sec:formulation} began as simply an LP relaxation of the binary input dependence model developed by Lee et al. From our discussion we see that the \mbox{MCNFLI} has some interesting modeling applications beyond its original purpose to interdependent networks where the flows through certain arcs are bounded by piecewise linear concave functions of the flows through other arcs. In Section~\ref{sec:applications} we also see how it can be used to obtain approximate solutions and near-optimal feasible mixed integer solutions to the original MILP. {In particular the objective value of the LP relaxation is typically extremely similar to that of the MILP (within $1.7\%$ across all trials), and in the case of \textsc{Structured-Type} trials nearly all randomized rounding schemes were able to find a feasible MILP solution after a single iteration, and typically with an extremely small relative error (less than $6.5\%$ across all trials). Moreover,} due to its special basis structure the modified network simplex algorithm described in Section \ref{sec:simplex} allows for it to be solved much faster than with general LP solution techniques. If the number of interdependencies is sufficiently small in comparison to the size of the overall network we can even achieve a running time that approaches that of network simplex, in spite of the fact that network simplex cannot be directly applied to this problem. {In particular the time requirement of this generalization is $O(m+n+p^3)$, which is equivalent to the $O(m+n)$ complexity of network simplex when $p$ is $O({n}^{1/3})$.}

From the computational results presented in Section \ref{subsec:computational} it is clear that the linear relaxation of the {\mbox{BIDM}} typically produces extremely similar objective values while being significantly less computationally expensive to solve, {though it should be noted that these results are based on a relatively limited range of network topologies, and that more computational testing would be required to extend these results to a broader range of applications. Regardless, the computational savings associated with the linear relaxation could become even more pronounced within a larger model that requires solving instances of {the \mbox{BIDM}} repeatedly during its solution process, for example in the disaster recovery models by Cavdaroglu et al.\ \cite{cavdaroglu2013,cavdaroglu2010,nurre2012,nurre2014,sharkey2015}.} Of particular interest are applications for which only the objective value of {the \mbox{BIDM}} is significant within the overall model and not the flow vector, itself, since in this case the objective of {the \mbox{LIDM}} can be used as a direct substitute for that of {the \mbox{BIDM}} without requiring a randomized rounding algorithm to obtain a feasible solution. Rumpf~2020 \cite{rumpfthesis} explored applications of the \mbox{MCNFLI} model to network interdiction games for planning interdependent network defenses against intelligent attacks, but further computational experiments in this area are needed.

%==============================================================================
\section*{Acknowledgments}

The authors would like to thank Dr.~Lili Du, Dr.~Robert Ellis, Dr.~Zongzhi Li, and Dr.~Michael Pelsmajer for their valuable input, as well as Susanne Mathies for assistance regarding their previous research. The random problem instance generator mentioned in Section \ref{subsec:computational} is based on a NETGEN implementation programmed in C by Norbert Schlenker. Thank you also to Daniel Simmons, Dr.~Qipeng Phil Zheng, and Dr.~Leandro C.\ Coelho for their CPLEX guides which proved invaluable in setting up our computational trials.
And, thanks to the anonymous referees for their comments that helped improve the exposition of the paper.

%==============================================================================
\small

% Update bibliography with BibTeX
%\bibliographystyle{plainurl}
%\bibliography{references}

% Import predefined bibliography

%==============================================================================

\appendix

% Appendix: Simplified Basis Characterization
\section{Simplified Basis Characterization}
\label{app:dhat}

In this appendix we describe an alternative way to characterize the basis that does not require storing the entire matrix $\mathbf{D}$ as defined in (\ref{eqn:ddef}). For the purposes of this explanation we will call a basic interdependent arc \textit{loose} if its linked slack variable is basic, and \textit{tight} otherwise. Suppose that the columns are arranged to that all tight arcs are listed before all loose arcs, and that the rows are arranged so that all interdependencies with nonbasic slack variables are listed above all interdependencies with basic slack variables. Then $\mathbf{Q}$ has the block structure
\begin{align}
	\mathbf{Q} = \begin{bmatrix}
		\mathbf{Q}_t & \\
		& \mathbf{Q}_l
	\end{bmatrix}
\end{align}

where $\mathbf{Q}_t$ contains only the columns and rows corresponding to tight arcs, and $\mathbf{Q}_l$ contains only the columns and rows corresponding to loose arcs. Similarly, each submatrix $\mathbf{B}_h$ has the block structure $\mathbf{B}_h = \begin{bmatrix} \mathbf{B}_t^h & \mathbf{B}_l^h \end{bmatrix}$. Then (\ref{eqn:bstructure}) becomes
\begin{align}
	\mathbf{B} = \left[ \begin{array}{c|c|c|c|c|c|c|c}
		\mathbf{T}_1 & & & & \mathbf{B}_t^1 & \mathbf{B}_l^1 & & \\
		\hline & \mathbf{T}_2 & & & \mathbf{B}_t^2 & \mathbf{B}_l^2 & & \\
		\hline & & \ddots & & \vdots & \vdots & & \\
		\hline & & & \mathbf{T}_{r-p+1} & \mathbf{B}_t^{r-p+1} & \mathbf{B}_l^{r-p+1} & & \\
		\hline & & & & \mathbf{Q}_t & & & \\
		\hline & & & & & \mathbf{Q}_l & \mathbf{I}_1 & \\
		\hline & & & & & & & \mathbf{I}_2
	\end{array} \right]
\end{align}

where $\mathbf{I}_1$ and $\mathbf{I}_2$ represent identity matrices of the appropriate dimensions. Starting from this block structure and going through the same steps as in the proof of Lemma \ref{lemma:characterization} results in the following block structure for $\mathbf{D}$.
\begin{align}
	\label{eqn:ddefblock} \mathbf{D} = \left[ \begin{array}{c c c|c c c|c|c}
		\delta_1^1 & \cdots & \delta_{r_t}^1 & \delta_{r_t+1}^1 & \cdots & \delta_{r_t+r_l}^1 & & \\
		\vdots & \ddots & \vdots & \vdots & \ddots & \vdots & & \\
		\delta_1^{r-p} & \cdots & \delta_{r_t}^{r-p} & \delta_{r_t+1}^{r-p} & \cdots & \delta_{r_t+r_l}^{r-p} & & \\
		\hline & \mathbf{Q}_t & & & & & & \\
		\hline & & & & \mathbf{Q}_l & & \mathbf{I}_1 & \\
		\hline & & & & & & & \mathbf{I}_2
	\end{array} \right]
\end{align}

Here, $r_t$ is the number of tight arcs and $r_l$ is the number of loose arcs. This matrix $\mathbf{D} \in \mathbb{R}^{r \times r}$ is identical to the version shown in (\ref{eqn:ddef}), simply arranged into a different block structure. By Lemma \ref{lemma:characterization}, $\mathbf{B}$ is rank $m+p-1$ if and only if $\mathbf{D}$ is full-rank. Consider the submatrix $\mathbf{\hat{D}}$ of $\mathbf{D}$ defined by
\begin{align}
	\label{eqn:dhatdef} \mathbf{\hat{D}} := \left[ \begin{array}{c c c|c c c}
		\delta_1^1 & \cdots & \delta_{r_t}^1 & \delta_{r_t+1}^1 & \cdots & \delta_{r_t+r_l}^1 \\
		\vdots & \ddots & \vdots & \vdots & \ddots & \vdots \\
		\delta_1^{r-p} & \cdots & \delta_{r_t}^{r-p} & \delta_{r_t+1}^{r-p} & \cdots & \delta_{r_t+r_l}^{r-p} \\
		\hline & \mathbf{Q}_t & & & &
	\end{array} \right]
\end{align}

This matrix $\mathbf{\hat{D}} \in \mathbb{R}^{(r_t+r_l) \times (r_t+r_l)}$ is created by dropping from $\mathbf{D}$ all columns corresponding to basic slack variables and all rows corresponding to interdependencies whose associated slack variable is basic. Note that it is possible to have $r_t = r_l = 0$, in which case $\mathbf{\hat{D}}$ is $0 \times 0$ (this occurs exactly when there are no basic interdependent arcs, or equivalently when all $p$ slack variables are basic). For convenience we consider a $0 \times 0$ matrix to be full-rank. This allows the following observation:

\begin{observation}
	\label{obs:dhat} Matrix $\mathbf{D}$ as defined in (\ref{eqn:ddef}) is full-rank if and only if $\mathbf{\hat{D}}$ as defined in (\ref{eqn:dhatdef}) is full-rank.
\end{observation}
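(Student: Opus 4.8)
The plan is to prove the equivalence by a single block-determinant computation, exploiting the fact that $\mathbf{D}$ and $\mathbf{\hat{D}}$ are both square and differ only by rows and columns that carry the identity structure of $\mathbf{\hat{I}}$. First I would record the dimensions: $\mathbf{D} \in \mathbb{R}^{r \times r}$, while $\mathbf{\hat{D}}$ is obtained by deleting the $r_s$ columns indexed by basic slack variables together with the $r_s$ rows indexed by interdependencies whose slack is basic, leaving an $(r-r_s) \times (r-r_s) = (r_t+r_l) \times (r_t+r_l)$ matrix (this also explains why $\mathbf{\hat{D}}$ is square, since we delete exactly $r_s$ rows and $r_s$ columns). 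Because both matrices are square, ``full rank'' is equivalent to ``nonzero determinant'' for each, and it suffices to show $\det \mathbf{D} = \pm \det \mathbf{\hat{D}}$.

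The key structural observation, read off from (\ref{eqn:ddefblock}) together with the definition of $\mathbf{\hat{I}}$, is twofold. Each deleted column corresponds to a basic slack $s_{ij}^{kl}$, and its column of $\mathbf{D}$ is the unit vector whose single $1$ sits in the row of interdependence $(ij,kl)$, a row that is itself deleted (its slack is basic). Hence (a) every deleted column vanishes on the retained rows, since the tree rows and the rows of $\mathbf{Q}_t$ carry no entries in slack columns; and (b) the submatrix of $\mathbf{D}$ indexed by the deleted rows and deleted columns is, after matching each basic slack to its own interdependence, exactly the $r_s \times r_s$ identity, because $\mathbf{\hat{I}}$ supplies one $1$ per slack in its own interdependence row and no slack coefficients occur elsewhere.

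With these two facts I would reorder the rows and columns of $\mathbf{D}$ so that all retained indices come first and all deleted indices come last. By (a) the upper-right block vanishes and by (b) the lower-right block is the identity, so $\mathbf{D}$ takes the block lower-triangular form $\begin{bmatrix} \mathbf{\hat{D}} & \mathbf{0} \\ \mathbf{X} & \mathbf{I} \end{bmatrix}$, where the top-left block is precisely $\mathbf{\hat{D}}$ from (\ref{eqn:dhatdef}) (its rows being the tree rows and the rows of $\mathbf{Q}_t$, its columns being the tight and loose arc columns), $\mathbf{I}$ is the $r_s \times r_s$ identity, and $\mathbf{X}$ merely collects the $\mathbf{Q}_l$ entries falling on the deleted rows. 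A reordering changes the determinant only by a sign, so $\det \mathbf{D} = \pm \det \mathbf{\hat{D}} \cdot \det \mathbf{I} = \pm \det \mathbf{\hat{D}}$, giving the claimed equivalence. The degenerate case $r_t = r_l = 0$ is consistent: then $\mathbf{\hat{D}}$ is $0 \times 0$, taken to be full rank, while $\mathbf{D}$ reduces to an identity and is also full rank.

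I expect the only real obstacle to be the careful verification of fact (b), namely that a deleted row has no extra nonzero entry among the deleted slack columns beyond the single $1$ provided by $\mathbf{\hat{I}}$. This is exactly where the one-to-one correspondence between a basic slack and its interdependence, and the fact that slack coefficients appear only in $\mathbf{\hat{I}}$, must be invoked; once that is pinned down, the block-triangular reduction and the determinant identity are immediate.
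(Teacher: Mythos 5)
Your proof is correct and rests on exactly the same structural facts as the paper's argument: the retained rows vanish on the deleted slack columns, and the deleted rows restricted to the deleted columns form an identity block, so the deleted rows and columns cannot affect whether full rank is attained. Packaging this as a block lower-triangular determinant identity $\det\mathbf{D} = \pm\det\mathbf{\hat{D}}$ rather than as a linear-independence/rank count is only a cosmetic difference, and your handling of the degenerate $0\times 0$ case matches the paper's convention.
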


The proof of this relies on the fact that the rows dropped from $\mathbf{D}$ to obtain $\mathbf{\hat{D}}$ each contain the only nonzero entry in one of the columns that is also dropped. Thus the dropped rows are always linearly independent when taken together with the remaining rows, and their contents does not affect the rank of $\mathbf{D}$. Then we have the following corollary:

\begin{corollary}
	\label{cor:dhat} The rank of matrix $\mathbf{B}$ is $m+p-1$ if and only if matrix $\mathbf{\hat{D}}$ as defined in (\ref{eqn:dhatdef}) is full-rank.
\end{corollary}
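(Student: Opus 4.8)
The plan is to obtain this corollary as an immediate consequence of chaining together the two preceding results, Lemma \ref{lemma:characterization} and Observation \ref{obs:dhat}, by transitivity of the biconditional. Lemma \ref{lemma:characterization} already establishes the equivalence between $\mathbf{B}$ having rank $m+p-1$ and the $r \times r$ matrix $\mathbf{D}$ having rank $r$; since $\mathbf{D}$ is square of dimension $r$, having rank $r$ is precisely the statement that $\mathbf{D}$ is full-rank. Observation \ref{obs:dhat} then supplies the second link, namely that $\mathbf{D}$ is full-rank if and only if $\mathbf{\hat{D}}$ is full-rank. Concatenating these two equivalences yields exactly the claimed statement.

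Concretely, I would write: by Lemma \ref{lemma:characterization}, $\mathrm{rank}(\mathbf{B}) = m+p-1$ holds if and only if $\mathbf{D}$ has rank $r$, i.e.\ if and only if $\mathbf{D}$ is full-rank; and by Observation \ref{obs:dhat}, $\mathbf{D}$ is full-rank if and only if $\mathbf{\hat{D}}$ is full-rank. Combining these gives that $\mathrm{rank}(\mathbf{B}) = m+p-1$ if and only if $\mathbf{\hat{D}}$ is full-rank, which is the desired conclusion. The only bookkeeping point worth flagging is to make explicit that ``$\mathbf{D}$ has rank $r$'' and ``$\mathbf{D}$ is full-rank'' are synonymous here because $\mathbf{D} \in \mathbb{R}^{r \times r}$, so that the hypothesis phrasing of Lemma \ref{lemma:characterization} matches the rank terminology used in Observation \ref{obs:dhat}.

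There is effectively no substantive obstacle in this proof, as both biconditionals have already been established; the corollary is purely a transitive combination and should require only a sentence or two. If anything, the one place to exercise a little care is the degenerate boundary case noted just before the statement, where $r_t = r_l = 0$ and $\mathbf{\hat{D}}$ is the $0 \times 0$ matrix. I would rely on the paper's stated convention that a $0 \times 0$ matrix is considered full-rank, which ensures the equivalence in Observation \ref{obs:dhat} (and hence the corollary) holds vacuously in that case as well, so no separate argument is needed.
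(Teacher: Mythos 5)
Your proof is correct and matches the paper's own argument, which likewise obtains the corollary immediately by combining Lemma \ref{lemma:characterization} with Observation \ref{obs:dhat}. Your extra remarks about ``rank $r$'' versus ``full-rank'' for the square matrix $\mathbf{D}$ and the $0 \times 0$ convention are sensible bookkeeping but do not change the approach.
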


This follows immediately from Lemma \ref{lemma:characterization} and Observation \ref{obs:dhat}. The advantage of maintaining matrix $\mathbf{\hat{D}}$ instead of $\mathbf{D}$ is that $\mathbf{\hat{D}}$ is smaller (its dimensions are between $0 \times 0$ and $2p \times 2p$, in contrast to the dimensions of $\mathbf{D}$ which are between $p \times p$ and $3p \times 3p$), and can thus be used to reduce the sizes of the linear systems that need to be solved during the modified network simplex algorithm presented in Section \ref{sec:simplex}. This requires some minor modifications to the procedures defined above.

Within the reduced cost calculation algorithm described in Section \ref{subsec:reducedcosts}, the system $\mathbf{D}' \boldsymbol{\sigma} = \mathbf{c^{\boldsymbol{\pi}}}$ is solved to obtain a potential correction term for each component of $F$ and each slack variable. Let $\mathbf{\hat{\boldsymbol{\sigma}}}$ and $\mathbf{\hat{c}^{\boldsymbol{\pi}}}$ be the subvectors of $\boldsymbol{\sigma}$ and $\mathbf{c^{\boldsymbol{\pi}}}$, respectively, which exclude the elements corresponding to basic slack variables. The proof of Proposition \ref{prop:dsigma} shows that the system $\mathbf{D}' \boldsymbol{\sigma} = \mathbf{c^{\boldsymbol{\pi}}}$ results in $\sigma_{ij}^{kl} = 0$ for all basic slack variables $s_{ij}^{kl}$, and so it suffices to solve the system $\mathbf{\hat{D}}' \mathbf{\hat{\boldsymbol{\sigma}}} = \mathbf{\hat{c}^{\boldsymbol{\pi}}}$ in place of $\mathbf{D}' \boldsymbol{\sigma} = \mathbf{c^{\boldsymbol{\pi}}}$ and then set $\sigma_{ij}^{kl} \equiv 0$ for all basic slack variables.

Within Case 2 of the change of basis algorithm described in Section \ref{subsubsec:changeofbasis}, the perturbed system $\mathbf{D} \mathbf{\tilde{x}} = \mathbf{\tilde{b}}$ is solved to determine the values of the basic interdependent variables. Let $\mathbf{\hat{x}}$ be the subvector of $\mathbf{\tilde{x}}$ which excludes elements corresponding to basic slack variables, and let $\mathbf{\hat{b}}$ be the subvector of $\mathbf{\tilde{b}}$ which excludes elements corresponding to the interdependencies of loose arcs. Then the system {$\mathbf{\hat{D}} \mathbf{\hat{x}} = \mathbf{\hat{b}}$} can be solved to obtain the flows $\mathbf{\hat{x}}$ of all basic interdependent arcs, which along with the definitions of $L$ and $U$ gives the flows of all interdependent arcs. These flows, along with the linking equations (\ref{eqn:mcnflilinking2}), give the values of all basic slack variables. Because $\mathbf{\hat{x}}$ excludes basic slack variables, additional steps are required to calculate the entries of $\Delta \mathbf{\tilde{x}}$ corresponding to slack variables. If the incoming variable has a basic linked slack variable, say $s_{ij}^{kl}$, then $\Delta s_{ij}^{kl} = \hat{a}_{ij} \theta$. If $s_{ij}^{kl}$ is basic and both linked arcs are also basic, then $\Delta s_{ij}^{kl} = \alpha_{ij}^{kl} \Delta x_{ij} - \Delta x_{kl}$ (if $ij$ is nonbasic then we can set $\Delta x_{ij} \equiv 0$, and likewise if $kl$ is nonbasic we can set $\Delta x_{kl} \equiv 0$).

Applying these improvements which use $\mathbf{\hat{D}}$ in place of $\mathbf{D}$ does not change the worst-case time complexity of the modified network simplex algorithm, which remains $O(m+n+p^3)$ since in the worst case $\mathbf{\hat{D}}$ still has dimensions of $O(p)$. However in practice $\mathbf{\hat{D}}$ is much smaller than $\mathbf{D}$, resulting in a practical reduction in computational time as well as improving the best-case time complexity from $O(n+m+p^3)$ to $O(n+m)$, which occurs when $\mathbf{\hat{D}}$ is $0 \times 0$.

% Appendix: Examples of Randomized Rounding Failure
\section{Examples of Randomized Rounding Failure}
\label{app:rrfailure}

The {\mbox{$\textsc{RR-Child}(0.00)$}} and {\mbox{$\textsc{RR-Parent}(0.00)$}} randomized rounding schemes described in Section \ref{subsubsec:rr} carry the potential of failing to terminate for certain problem instances. This can occur if $P_{ij}^{kl}$ takes a value of either 0 or 1, in which case the scheme will choose the same value for $y_{ij}^{kl}$ in every iteration, even if that decision can never lead to a feasible MILP solution. Figures \ref{subfig:rrfail1} and \ref{subfig:rrfail2} show two different ways in which this can occur.

Figure \ref{subfig:rrfail1} shows an example in which {\mbox{$\textsc{RR-Child}(0.00)$}} and {\mbox{$\textsc{RR-Parent}(0.00)$}} both attempt in each iteration to force the saturation of an arc which is unusable in any feasible MILP solution. In this network the optimal LP solution saturates both $(7,8)$ and $(9,10)$, which form a parent/child pair, and so {\mbox{$\textsc{RR-Child}(0.00)$}} and {\mbox{$\textsc{RR-Parent}(0.00)$}} both set $P_{ij}^{kl}$ equal to 1. However, child arcs $(5,7)$ and $(9,10)$ cannot be used in any feasible MILP solution as their parent arcs $(2,3)$ and $(7,8)$ cannot be saturated due to bottlenecks. As a result, {\mbox{$\textsc{RR-Child}(0.00)$}} and {\mbox{$\textsc{RR-Parent}(0.00)$}} both create an infeasible MILP in each iteration.

Similarly, Figure \ref{subfig:rrfail2} shows an example in which {\mbox{$\textsc{RR-Child}(0.00)$}} and {\mbox{$\textsc{RR-Parent}(0.00)$}} both attempt in each iteration to disallow the use of an arc which is required in any feasible MILP solution. In this network the optimal LP solution sends no flow through either $(6,7)$ or $(8,9)$, which form a parent/child pair, resulting in both {\mbox{$\textsc{RR-Child}(0.00)$}} and {\mbox{$\textsc{RR-Parent}(0.00)$}} setting $P_{ij}^{kl}$ equal to 0. However in this case all feasible MILP solutions require the use of both $(6,7)$ and $(8,9)$, and so again each iteration of {\mbox{$\textsc{RR-Child}(0.00)$}} and {\mbox{$\textsc{RR-Parent}(0.00)$}} result in an infeasible MILP.

\FloatBarrier

\begin{figure}[p]
	\centering
	\subcaptionbox{Failure due to LP solution saturating arcs that no feasible MILP solution can use. Interdependencies: $x_{(5,7)} \le x_{(2,3)}$, $x_{(9,10)} \le x_{(7,8)}$\label{subfig:rrfail1}}{ {\includegraphics[width=0.4\textwidth]{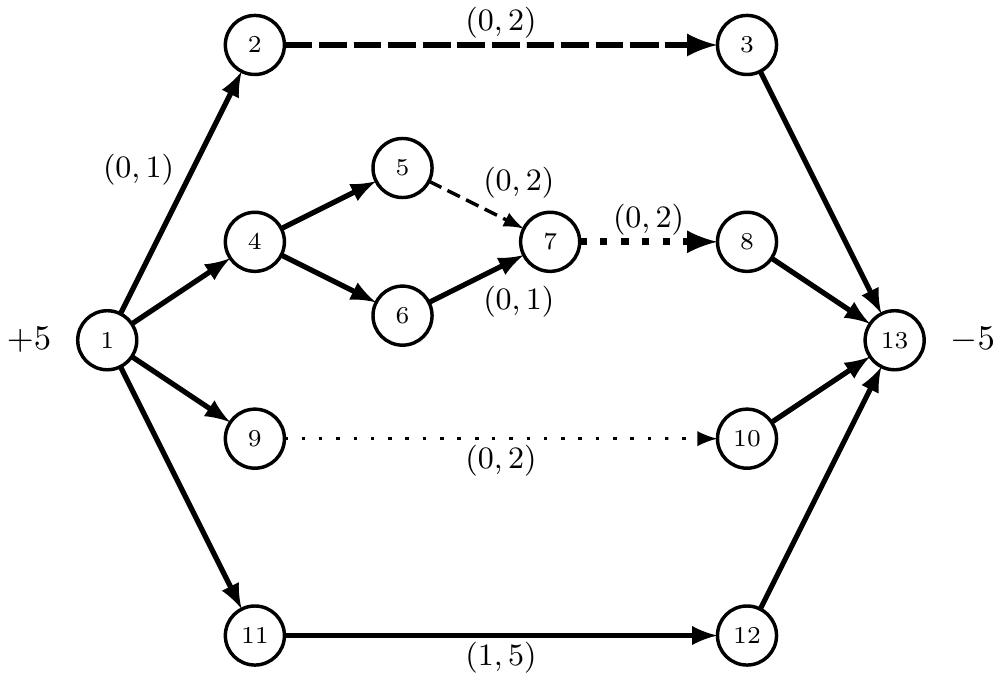}}} \hspace{0.5in}
	\subcaptionbox{Failure due to LP solution excluding arcs required by any feasible MILP solution. Interdependencies: $x_{(4,5)} \le 2 x_{(2,3)}$, $x_{(8,9)} \le x_{(6,7)}$\label{subfig:rrfail2}}{ {\includegraphics[width=0.4\textwidth]{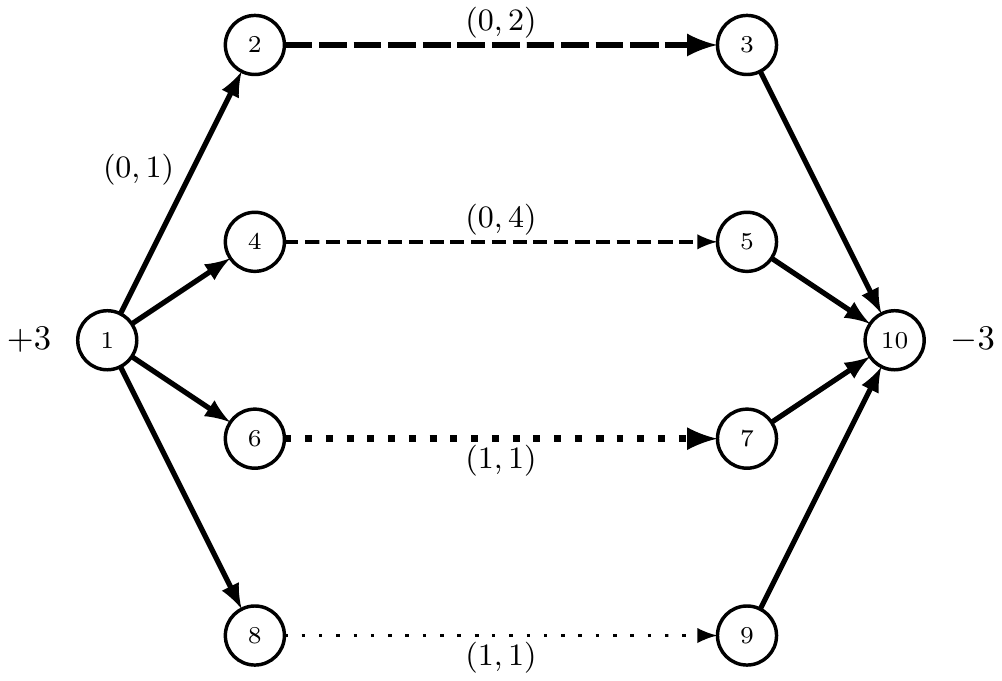}}}
	\caption{Example networks for which {\mbox{$\textsc{RR-Child}(0.00)$}} and {\mbox{$\textsc{RR-Parent}(0.00)$}} both fail in every iteration. The number next to each node indicates the supply value at that node, while the ordered pair next to each arc indicates its cost and capacity, respectively. Unlisted supply values are zero, while unlisted cost/capacity pairs are $(0,\infty)$. Dashed and dotted arcs indicate interdependent pairs, with the bolder arc indicating the parent and the thinner arc indicating the child in each pair. Interdependencies are listed under each network.}
	\label{fig:rrfail}
\end{figure}

\begin{figure}[p]
	\centering
	\subcaptionbox{Example for which the LP relaxation of {the \mbox{BIDM}} produces an arbitrarily bad lower bound for the optimal cost.\label{subfig:badexample}}{ {\includegraphics[width=0.3\textwidth]{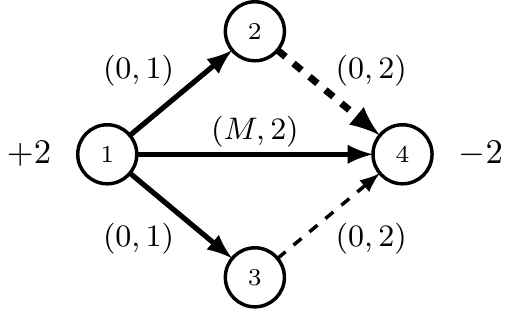}}} \hspace{0.5in}
	\subcaptionbox{Example for which the randomized rounding solutions of {the \mbox{BIDM}} can produce arbitrarily bad upper bounds for the optimal cost.\label{subfig:badexamplerr}}{ {\includegraphics[width=0.3\textwidth]{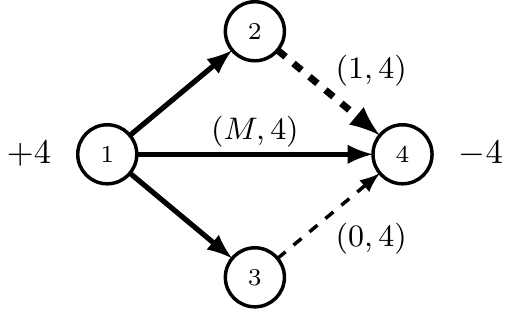}}}
	\caption{Example networks showing that the LP relaxation of {the \mbox{BIDM}} and the randomized rounding solutions can produce arbitrarily bad upper and lower bounds for the optimal cost. The labeling system is the same as used in Figure \ref{fig:rrfail}. The arc cost $M$ is an arbitrary constant.}
	\label{fig:badexample}
\end{figure}

\FloatBarrier

% Appendix: Examples of Arbitrarily Bad Theoretical Bounds
\section{Examples of Arbitrarily Bad Theoretical Bounds}
\label{app:boundexample}

As noted in Section \ref{subsubsec:rr}, the LP relaxation of {the \mbox{BIDM}} provides a lower bound for its optimal value while any randomized rounding solution provides an upper bound, but both of these bounds may be arbitrarily far away from the true value. Figures \ref{subfig:badexample} and \ref{subfig:badexamplerr} show examples for which each of these is the case.

In the network shown in Figure \ref{subfig:badexample}, the optimal LP solution sends 1 unit of flow through node 2 to half-saturate parent arc $(2,4)$, allowing the remaining 1 unit of flow to be sent through node 3 to take advantage of the half-usable capacity of child arc $(3,4)$, at a flow cost of 0. The only feasible MILP solution sends all flow through arc $(1,4)$ at a cost of $M$, and since $M$ can be arbitrarily large, the LP objective can be arbitrarily far below the MILP objective.

Similarly, in the network shown in Figure \ref{subfig:badexamplerr} the optimal LP solution splits the flow equally between the path through node 2 and the path through node 4 by half-saturating the parent/child pair $(2,4)$ and $(3,4)$. Assuming $M > 1$, the optimal MILP solution sends all flow through node 2 at a total cost of $4$. Because both the parent and the child are half-saturated in the LP solution, any randomized rounding scheme has a probability of 0.5 of either forcing the use of $(2,4)$ or disallowing the use of $(3,4)$. If $(3,4)$ is disallowed, then the only remaining feasible MILP solution sends all flow through $(1,4)$ at a cost of $4M$, and so the randomized rounding objective can be arbitrarily far above the MILP objective.

% Appendix: Numerical Example
\section{Numerical Example}
\label{app:byhand}

In order to illustrate how our proposed solution algorithm works in practice we will conduct a few iterations on the example network shown in Figure \ref{fig:exbase}. This network $G$ is based on the example network from Calvete 2003 \cite{calvete2003}. It has $m=11$ nodes, $n=24$ arcs, and $p=4$ interdependencies, meaning that its basis must contain $m+p-1 = 14$ variables at all times.

\begin{figure}[p]
	\centering
	 {\includegraphics[width=0.3\textwidth]{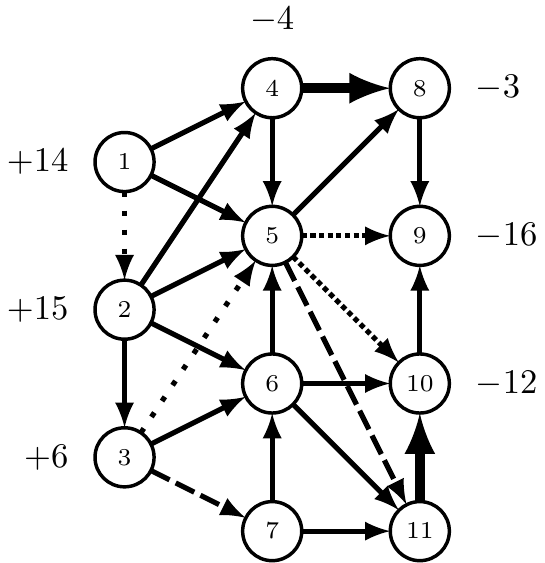}}
	\begin{align*}
		\begin{array}{c c c c c c c c}
			c_{(1,2)} = 5 & c_{(1,4)} = 8 & c_{(1,5)} = 12 & c_{(2,3)} = \frac{1}{2} & c_{(2,4)} = \frac{1}{2} & c_{(2,5)} = \frac{1}{2} & c_{(2,6)} = 1 & c_{(3,5)} = 10 \\
			c_{(3,6)} = \frac{1}{2} & c_{(3,7)} = 3 & c_{(4,5)} = 8 & c_{(4,8)} = 5 & c_{(5,8)} = 1 & c_{(5,9)} = 1 & c_{(5,10)} = 1 & c_{(5,11)} = 3 \\
			c_{(6,5)} = 3 & c_{(6,10)} = \frac{1}{2} & c_{(6,11)} = \frac{1}{2} & c_{(7,6)} = 5 & c_{(7,11)} = 4 & c_{(8,9)} = 4 & c_{(10,9)} = 2 & c_{(11,10)} = 6
		\end{array}
	\end{align*}
	\begin{align*}
		\begin{array}{c c c c}
			x_{(11,10)} \le \frac{1}{2} x_{(4,8)} + 1, & x_{(5,11)} \le \frac{1}{2} x_{(3,7)} + 2, & x_{(1,2)} \le x_{(3,5)} + \frac{1}{2}, & x_{(5,10)} \le \frac{1}{2} x_{(5,9)}
		\end{array}
	\end{align*}
	\caption{Example network $G$ on which to conduct the modified network simplex algorithm. All arc capacities are 15. Supply values are listed next to each node (no value listed means zero). Arc costs are listed below the network. Interdependent arcs are highlighted, with the specific interdependencies listed below the network.}
	\label{fig:exbase}
\end{figure}

\begin{figure}[p]
	\centering \quad
	\subcaptionbox{Initial basis.\label{subfig:ex2}}{ {\includegraphics[width=0.2\textwidth]{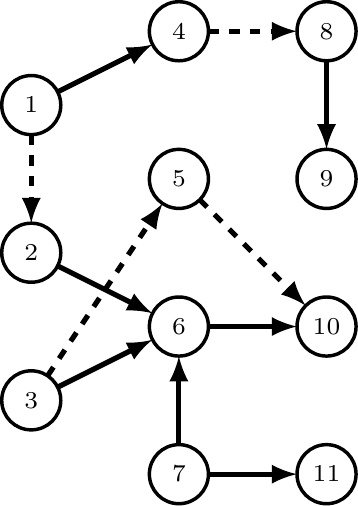}}} \hspace{0.5in}
	\subcaptionbox{Iteration 1.\label{subfig:ex3}}{ {\includegraphics[width=0.2\textwidth]{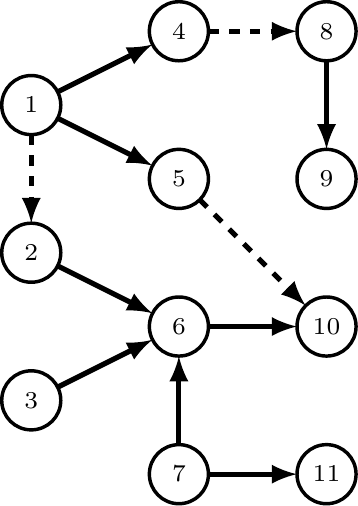}}}
	\hspace{0.5in}
	\subcaptionbox{Iteration 2.\label{subfig:ex4}}{ {\includegraphics[width=0.2\textwidth]{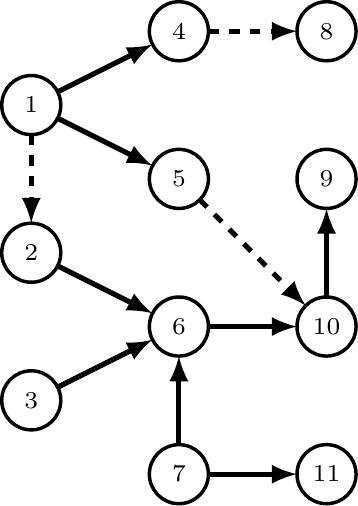}}}
	\caption{Basis structure of the example network after each iteration. Only the basic arcs are shown. Solid lines represent basic independent arcs, while dashed lines represent basic interdependent arcs. All bases contain slack variables $s_{(4,8)}^{(11,10)}$, $s_{(3,7)}^{(5,11)}$, and $s_{(5,9)}^{(5,10)}$.}
	\label{fig:examples}
\end{figure}

\paragraph{Initial Basis} The initial basis contains the arcs shown in Figure \ref{subfig:ex2} as well as slack variables $s_{(4,8)}^{(11,10)}$, $s_{(3,7)}^{(5,11)}$, and $s_{(5,9)}^{(5,10)}$. Most nonbasic variables are in $L$ and currently have a value of zero, while $x_{(2,5)}$ and $x_{(5,9)}$ are in $U$, putting them at their upper bound of 15. The initial values of the basic variables are
\begin{align*}
	\begin{array}{c c c c c c c}
		x_{(1,2)} = 6 & x_{(1,4)} = 8 & x_{(2,6)} = 6 & x_{(3,5)} = \frac{11}{2} & x_{(3,6)} = \frac{1}{2} & x_{(4,8)} = 4 & x_{(5,10)} = \frac{11}{2} \\
		x_{(6,10)} = \frac{13}{2} & x_{(7,6)} = 0 & x_{(7,11)} = 0 & x_{(8,9)} = 1 & s_{(4,8)}^{(11,10)} = 3 & s_{(3,7)}^{(5,11)} = 2 & s_{(5,9)}^{(5,10)} = 2
	\end{array}
\end{align*}

The basic forest $F$ contains four components: $T_1 = \{1,4\}$, $T_2 = \{2,3,6,7,10,11\}$, $T_3 = \{5\}$, and $T_4 = \{8,9\}$. All components, combined, contain the 7 basic independent arcs, while the remaining 7 basic variables are interdependent: $x_{(4,8)}$, $x_{(3,5)}$, $x_{(1,2)}$, $x_{(5,10)}$, $s_{(4,8)}^{(11,10)}$, $s_{(3,7)}^{(5,11)}$, and $s_{(5,9)}^{(5,10)}$. This means that $r=7$, thus $\mathbf{D}$ is $7 \times 7$. Arranging the basic interdependent variables and trees in this order, the matrix $\mathbf{D}$ as described in (\ref{eqn:ddef}) is
\begin{align*}
	\mathbf{D} = \left[
	\begin{array}{r r r r|c c c}
		1 & 0 & 1 & 0 & 0 & 0 & 0 \\
		0 & 1 & -1 & -1 & 0 & 0 & 0 \\
		0 & -1 & 0 & 1 & 0 & 0 & 0 \\
		\hline -\frac{1}{2} & 0 & 0 & 0 & 1 & 0 & 0 \\
		0 & 0 & 0 & 0 & 0 & 1 & 0 \\
		0 & -1 & 1 & 0 & 0 & 0 & 0 \\
		0 & 0 & 0 & 1 & 0 & 0 & 1
	\end{array}
	\right]
\end{align*}

\paragraph{Iteration 1}

To calculate potentials using the method described in Section \ref{subsec:reducedcosts}, we begin by tentatively assigning a potential of zero to each interdependence. For the node potentials we select an arbitrary root for each component of the basic forest and assign it a potential of zero, in this case selecting 1 as the root of $T_1$, 2 as the root of $T_2$, 5 as the root of $T_3$, and 8 as the root of $T_4$. We then use equations (\ref{eqn:rcindependent}) to calculate the remaining node potentials from root to leaf, obtaining a complete potential vector of
\begin{align*}
	\boldsymbol{\pi} = \left[
	\begin{array}{c c c c c c c c c c c|c c c c}
		0 & 0 & -\frac{1}{2} & -8 & 0 & -1 & 4 & 0 & -4 & -\frac{3}{2} & 0 & 0 & 0 & 0 & 0
	\end{array}
	\right]'
\end{align*}

We then use these values in equations (\ref{eqn:rcparent})--(\ref{eqn:rcslack}) for the basic interdependent variables, obtaining
\begin{align*}
	\begin{array}{c c c c c}
		c_{(4,8)}^\pi = 13 & c_{(3,5)}^\pi = \frac{21}{2} & c_{(1,2)}^\pi = 5 & c_{(5,10)}^\pi = -\frac{1}{2} & (c^\pi)_{(4,8)}^{(11,10)} = 0
	\end{array}
\end{align*}

Some of these are nonzero, and so we must calculate a correction term for each forest component and interdependence. Solving $\mathbf{D}' \boldsymbol{\sigma} = \mathbf{c^{\boldsymbol{\pi}}}$ gives a correction vector of
\begin{align*}
	\boldsymbol{\sigma} = \left[
	\begin{array}{c c c|c c c c}
		13 & -2 & -\frac{5}{2} & 0 & 0 & -10 & 0
	\end{array}
	\right]'
\end{align*}

Applying these to the initial potentials $\boldsymbol{\pi}$ using (\ref{eqn:corrected1})--(\ref{eqn:corrected3}) gives a corrected potential vector of
\begin{align*}
	\mathbf{\tilde{\boldsymbol{\pi}}} = \left[
	\begin{array}{c c c c c c c c c c c|c c c c}
		13 & -2 & -\frac{5}{2} & 5 & -\frac{5}{2} & -3 & 2 & 0 & -4 & -\frac{7}{2} & -2 & 0 & 0 & -10 & 0
	\end{array}
	\right]'
\end{align*}

Equipped with the potential values, we can use equations (\ref{eqn:rcindependent})--(\ref{eqn:rcslack}) to calculate the reduced cost of any nonbasic variable. In this case, we might notice that $c_{(1,5)}^\pi = c_{(1,5)} - (\tilde{\pi}_1 - \tilde{\pi}_5) = 12 - (13 + \frac{5}{2}) = -\frac{7}{2}$. Since $x_{(1,5)} \in L$ and it has a negative reduced cost it is a candidate to enter the basis, and so we proceed with a change of basis.

$(1,5)$ is an independent arc and bridges the gap between $T_1$ and $T_3$, making this a Case 2 change of basis.  We first determine the net requirement values $\mathbf{\tilde{b}}$ as defined in Section \ref{subsubsec:basicvalues}. For each component of $F$ we find the total supply value and inflow from arcs in $U$, resulting in $b(T_1) = 10$, $b(T_2) = -6$, and $b(T_3) = 0$. Interdependencies $t=1,2,3$ do not contain any variables in $U$, and so by (\ref{eqn:btdef}) we have $b(1) = \beta_{(4,8)}^{(11,10)} = 1$, $b(2) = \beta_{(3,7)}^{(5,11)} = 2$, and $b(3) = \beta_{(3,5)}^{(1,2)} = \frac{1}{2}$. Interdependent arc $(5,9)$ is in $U$, so $b(4) = \beta_{(5,9)}^{(5,10)} + \alpha_{(5,9)}^{(5,10)} u_{(5,9)} = \frac{15}{2}$.

We use these values to form the perturbed system $\mathbf{D} \mathbf{\tilde{x}} = \mathbf{\tilde{b}}$, adding $\theta$ to $b(T_3)$ and $-\theta$ to $b(T_1)$ because the incoming arc leaves $T_1$ and enters $T_3$. The solution to the system is
\begin{align*}
	\mathbf{\tilde{x}} = \left[
	\begin{array}{c c c c|c c c}
		4 & \frac{11}{2} - \theta & 6 - \theta & \frac{11}{2} & 3 & 2 & 2
	\end{array}
	\right]'
\end{align*}

Subtracting the initial basic feasible solution from this vector gives us the change vector
\begin{align*}
	\Delta \mathbf{\tilde{x}} = \left[
	\begin{array}{c c c c|c c c}
		0 & -\theta & -\theta & 0 & 0 & 0 & 0
	\end{array}
	\right]'
\end{align*}

Next we must determine which basic independent variables change. We may ignore $T_3$ because it contains no arcs, and $T_4$ because it is not incident to any of the affected arcs. Within $T_1$ and $T_2$, we must calculate changes in the modified net requirements $\mathring{b}(i)$ of each affected node $i$ as referenced in Section \ref{subsubsec:basicvalues}. We need only calculate the modified requirement values of nodes 1, 2, 3, and 5, since these are the only nodes incident to affected arcs. This gives
\begin{align*}
	\begin{array}{c c c c}
		\Delta \mathring{b}(1) = 0 & \Delta \mathring{b}(2) = \theta & \Delta \mathring{b}(3) = -\theta & \Delta \mathring{b}(5) = 0
	\end{array}
\end{align*}

The only nonzero changes occur at nodes 2 and 3. Finally we can scan through each tree from leaf to root, calculating the change in each independent flow variable as described in Section \ref{subsubsec:basicvalues}. There are only two nonzero changes: $\Delta x_{(2,6)} = -\theta$ and $\Delta x_{(3,6)} = \theta$.

Having determined the full change vector $\Delta \mathbf{x}$, we use the method outlined in Section \ref{subsubsec:changeofbasis} to calculate $\theta^*$ and the blocking variables. The first variable to reach a bound as $\theta$ increases is $x_{(3,5)}$, which reaches zero when $\theta = \frac{11}{2}$. Then the change of basis consists of $(1,5)$ entering the basis from $L$, $(3,5)$ leaving the basis into $L$, and a change increment of $\theta^* = \frac{11}{2}$. The new basis is shown in Figure \ref{subfig:ex3}, and the values of the basic variables are
\begin{align*}
	\begin{array}{c c c c c c c}
		x_{(1,2)} = \frac{1}{2} & x_{(1,4)} = 8 & x_{(2,6)} = \frac{1}{2} & x_{(1,5)} = \frac{11}{2} & x_{(3,6)} = 6 & x_{(4,8)} = 4 & x_{(5,10)} = \frac{11}{2} \\
		x_{(6,10)} = \frac{13}{2} & x_{(7,6)} = 0 & x_{(7,11)} = 0 & x_{(8,9)} = 1 & s_{(4,8)}^{(11,10)} = 3 & s_{(3,7)}^{(5,11)} = 2 & s_{(5,9)}^{(5,10)} = 2
	\end{array}
\end{align*}

\paragraph{Iteration 2}

Due to the previous basis change, components $T_1$ and $T_3$ merge into a single component. We will label the new components $T_1 = \{1,4,5\}$, $T_2 = \{2,3,6,7,10,11\}$, and $T_3 = \{8,9\}$. Under this naming scheme, and keeping the interdependencies in their previous order, we now have
\begin{align*}
	\mathbf{D} = \left[
	\begin{array}{r r r|r r r}
		1 & 1 & 1 & 0 & 0 & 0\\
		0 & -1 & -1 & 0 & 0 & 0 \\
		\hline -\frac{1}{2} & 0 & 0 & 1 & 0 & 0 \\
		0 & 0 & 0 & 0 & 1 & 0 \\
		0 & 1 & 0 & 0 & 0 & 0 \\
		0 & 0 & 1 & 0 & 0 & 1
	\end{array}
	\right]
\end{align*}

Going through the same steps as in the previous iteration, the corrected potential vector is
\begin{align*}
	\mathbf{\tilde{\boldsymbol{\pi}}} = \left[
	\begin{array}{c c c c c c c c c c c|c c c c}
		13 & \frac{3}{2} & 1 & 5 & 1 & \frac{1}{2} & \frac{11}{2} & 0 & -4 & 0 & \frac{3}{2} & 0 & 0 & -\frac{13}{2} & 0
	\end{array}
	\right]'
\end{align*}

From this we can calculate $c_{(10,9)}^\pi -2$, and since $x_{(10,9)} \in L$, it is a candidate to enter the basis. $(10,9)$ is an independent arc that bridges the gap between $T_2$ and $T_3$. This leads to another instance of change of basis Case 2. The perturbed net requirement vector is
\begin{align*}
	\mathbf{\tilde{b}} = \left[
	\begin{array}{c c|c c c c}
		10 & -6-\theta & 1 & 2 & \frac{1}{2} & \frac{15}{2}
	\end{array}
	\right]'
\end{align*}

Solving $\mathbf{D} \mathbf{\tilde{x}} = \mathbf{\tilde{b}}$ gives
\begin{align*}
	\mathbf{\tilde{x}} = \left[
	\begin{array}{c c c|c c c}
		4-\theta & \frac{1}{2} & \frac{11}{2} + \theta & 3 - \frac{1}{2} \theta & 2 & 2-\theta
	\end{array}
	\right]'
\end{align*}

The relevant supply value changes are
\begin{align*}
	\begin{array}{c c c c c}
		\Delta \mathring{b}(4) = -\theta & \Delta \mathring{b}(5) = \theta & \Delta \mathring{b}(8) = \theta & \Delta \mathring{b}(9) = -\theta & \Delta \mathring{b}(10) = 0
	\end{array}
\end{align*}

The only nonzero changes occur for nodes in components $T_1$ and $T_3$, and so only arcs in these trees need be considered. Calculating the changes in their arcs as before, we find three nonzero values: $\Delta x_{(1,4)} = -\theta$, $\Delta x_{(1,5)} = \theta$, and $\Delta x_{(8,9)} = -\theta$.

A total of eight variables changes as $\theta$ increases. The first to reach a bound is $x_{(8,9)}$, which becomes zero when $\theta^* = 1$. Then $x_{(10,9)}$ moves from $L$ into $B$, $x_{(8,9)}$ moves from $B$ into $L$, and the remaining variables are adjusted by substituting $\theta = 1$ into their change terms. The new basis is shown in Figure \ref{subfig:ex4}, and the values of the basic variables are now
\begin{align*}
	\begin{array}{c c c c c c c}
		x_{(1,2)} = \frac{1}{2} & x_{(1,4)} = 7 & x_{(2,6)} = \frac{1}{2} & x_{(1,5)} = \frac{13}{2} & x_{(3,6)} = 6 & x_{(4,8)} = 3 & x_{(5,10)} = \frac{13}{2} \\
		x_{(6,10)} = \frac{13}{2} & x_{(7,6)} = 0 & x_{(7,11)} = 0 & x_{(10,9)} = 1 & s_{(4,8)}^{(11,10)} = \frac{5}{2} & s_{(3,7)}^{(5,11)} = 2 & s_{(5,9)}^{(5,10)} = 1
	\end{array}
\end{align*}

\paragraph{Iteration 3}

If the new components are labeled as $T_1 = \{1,4,5\}$, $T_2 = \{2,3,6,7,9,10,11\}$, and $T_3 = \{8\}$, then $\mathbf{D}$ remains exactly the same as in the previous iteration since all basic independent arcs still bridge the same tree indices. Applying the same potential calculation technique, the corrected potential vector is
\begin{align*}
	\mathbf{\tilde{\boldsymbol{\pi}}} = \left[
	\begin{array}{c c c c c c c c c c c|c c c c}
		13 & \frac{3}{2} & 1 & 5 & 1 & \frac{1}{2} & \frac{11}{2} & 0 & -2 & 0 & \frac{3}{2} & 0 & 0 & -\frac{19}{2} & 0
	\end{array}
	\right]'
\end{align*}

Using these potentials to calculate reduced costs, we find that no variable in $L$ has a negative reduced cost and no variable in $U$ has a positive reduced cost. This implies that the current solution is optimal. We may terminate the algorithm and output the current solution vector $\mathbf{x}$, which has a total cost of $189.25$.

% Appendix: Computational Trial Data Tables
\section{Computational Trial Data Tables}
\label{app:tables}

This section contains the full data tables for all trials described in Section \ref{subsec:computational}. {The raw data summarized in these tables can be viewed online \cite{p1data}}. Tables \ref{table:lpgapnode} and \ref{table:lpgaparc} show the results of the LP relaxation trials for {\textsc{Structured-Type}} and {\textsc{Unstructured-Type}} problems, respectively. Within each table, columns correspond to different percentages of sink nodes (for {\textsc{Structured-Type}}) or arcs (for {\textsc{Unstructured-Type}}) acting as parents, while rows correspond to different network densities. Each table entry gives the mean and standard deviation in the relative error for all successful trials of the given network type, as well as the number ($n$) of trials on which the statistics are based. All tables throughout this section are organized in the same format.

Tables \ref{table:rrc0gapnode}--\ref{table:rrfgapnode} show the results of the {\mbox{$\textsc{RR-Child}(0.00)$}}, {\mbox{$\textsc{RR-Child}(0.01)$}}, {\mbox{$\textsc{RR-Child}(0.05)$}}, and {\mbox{$\textsc{RR-Fair}$}} trials for {\textsc{Structured-Type}} problems, respectively, and corresponds to Figure \ref{fig:rrgapnode}. All results displayed are based on the full set of 60 trials, except for the single failed case of {\mbox{$\textsc{RR-Child}(0.00)$}} for 512 nodes and 10\% of sinks interdependent. Tables \ref{table:rrc0fail}--\ref{table:rrffail} show the failure rates of the {\mbox{$\textsc{RR-Child}(0.00)$}}, {\mbox{$\textsc{RR-Child}(0.01)$}}, {\mbox{$\textsc{RR-Child}(0.05)$}}, and {\mbox{$\textsc{RR-Fair}$}} trials for {\textsc{Unstructured-Type}} problems, respectively, in terms of the percentage of the 60 trials in each category that failed to reach a feasible solution within 1000 attempts. Tables \ref{table:rrc0gaparc}--\ref{table:rrfgaparc} show the relative errors for the {\mbox{$\textsc{RR-Child}(0.00)$}}, {\mbox{$\textsc{RR-Child}(0.01)$}}, {\mbox{$\textsc{RR-Child}(0.05)$}}, and {\mbox{$\textsc{RR-Fair}$}} trials for {\textsc{Unstructured-Type}} problems, respectively, limited to only the successful trials. Tables \ref{table:rrc0fail}--\ref{table:rrffail} correspond to Figure \ref{subfig:rrtarc} while Tables \ref{table:rrc0gaparc}--\ref{table:rrfgaparc} correspond to Figure \ref{subfig:rrgaparc}.

\begin{table}[p]
	\centering
	\begin{tabular}{l l l r r r r}
		\hline \multicolumn{7}{l}{\textbf{{\textsc{Structured-Type}}, LP}} \\
		Nodes & Arcs/Node & & \multicolumn{1}{l}{2\%} & \multicolumn{1}{l}{5\%} & \multicolumn{1}{l}{10\%} & \multicolumn{1}{l}{15\%} \\
		\hline 256 & 4 & Mean & 0.03218\% & 0.09867\% & 0.08198\% & 0.06282\% \\
		& & Std. Dev. & 0.09196\% & 0.29471\% & 0.20290\% & 0.12985\% \\
		& & $n$ & \multicolumn{1}{l}{60} & \multicolumn{1}{l}{60} & \multicolumn{1}{l}{60} & \multicolumn{1}{l}{60} \\
		\hhline{~------} & 8 & Mean & 0.02036\% & 0.03139\% & 0.06379\% & 0.05054\% \\
		& & Std. Dev. & 0.06776\% & 0.13905\% & 0.24593\% & 0.14242\% \\
		& & $n$ & \multicolumn{1}{l}{60} & \multicolumn{1}{l}{60} & \multicolumn{1}{l}{60} & \multicolumn{1}{l}{60} \\
		\hline 512 & 4 & Mean & 0.01396\% & 0.06195\% & 0.07669\% & 0.13231\% \\
		& & Std. Dev. & 0.03793\% & 0.11006\% & 0.10339\% & 0.18200\% \\
		& & $n$ & \multicolumn{1}{l}{60} & \multicolumn{1}{l}{60} & \multicolumn{1}{l}{60} & \multicolumn{1}{l}{60} \\
		\hhline{~------} & 8 & Mean & 0.00692\% & 0.00984\% & 0.03386\% & 0.02207\% \\
		& & Std. Dev. & 0.02401\% & 0.03239\% & 0.06100\% & 0.06443\% \\
		& & $n$ & \multicolumn{1}{l}{60} & \multicolumn{1}{l}{60} & \multicolumn{1}{l}{60} & \multicolumn{1}{l}{60} \\
		\hline
	\end{tabular}
	\caption{Relative error for LP relaxation in {\textsc{Structured-Type}} trials. Columns indicate percentage of sink nodes acting as parents.}
	\label{table:lpgapnode}
\end{table}

\begin{table}[p]
	\centering
	\begin{tabular}{l l l r r r r}
		\hline \multicolumn{7}{l}{\textbf{{\textsc{Unstructured-Type}}, LP}} \\
		Nodes & Arcs/Node & & \multicolumn{1}{l}{1\%} & \multicolumn{1}{l}{2\%} & \multicolumn{1}{l}{5\%} & \multicolumn{1}{l}{10\%} \\
		\hline 256 & 4 & Mean & {0.12551\%} & 0.33397\% & {0.40693\%} & 1.60084\% \\
		& & Std. Dev. & {0.28373\%} & 0.46781\% & {0.57130\%} & 1.48633\% \\
		& & $n$ & \multicolumn{1}{l}{60} & \multicolumn{1}{l}{60} & \multicolumn{1}{l}{60} & \multicolumn{1}{l}{60} \\
		\hhline{~------} & 8 & Mean & {0.08949\%} & 0.13669\% & {0.28389\%} & 0.71868\% \\
		& & Std. Dev. & {0.29966\%} & 0.31658\% & {0.46065\%} & 1.06470\% \\
		& & $n$ & \multicolumn{1}{l}{60} & \multicolumn{1}{l}{60} & \multicolumn{1}{l}{60} & \multicolumn{1}{l}{60} \\
		\hline 512 & 4 & Mean & {0.11005\%} & 0.28175\% & {0.41681\%} & 1.43813\% \\
		& & Std. Dev. & {0.24153\%} & 0.36669\% & {0.40287\%} & 0.92611\% \\
		& & $n$ & \multicolumn{1}{l}{60} & \multicolumn{1}{l}{60} & \multicolumn{1}{l}{60} & \multicolumn{1}{l}{60} \\
		\hhline{~------} & 8 & Mean & {0.05810\%} & 0.12283\% & {0.33781\%} & 0.83951\% \\
		& & Std. Dev. & {0.19562\%} & 0.33859\% & {0.51161\%} & 0.96077\% \\
		& & $n$ & \multicolumn{1}{l}{60} & \multicolumn{1}{l}{60} & \multicolumn{1}{l}{60} & \multicolumn{1}{l}{60} \\
		\hline
	\end{tabular}
	\caption{Relative error for LP relaxation in {\textsc{Unstructured-Type}} trials. Columns indicate percentage of arcs acting as parents.}
	\label{table:lpgaparc}
\end{table}

\begin{table}[p]
	\centering
	\begin{tabular}{l l l r r r r}
		\hline \multicolumn{7}{l}{\textbf{{\textsc{Structured-Type}}, {\mbox{$\textsc{RR-Child}\mathbf{(0.00)}$}}}} \\
		Nodes & Arcs/Node & & \multicolumn{1}{l}{2\%} & \multicolumn{1}{l}{5\%} & \multicolumn{1}{l}{10\%} & \multicolumn{1}{l}{15\%} \\
		\hline 256 & 4 & Mean & 0.08396\% & 0.12902\% & 0.30359\% & 0.47040\% \\
		& & Std. Dev. & 0.45237\% & 0.30627\% & 0.69299\% & 0.91533\% \\
		& & $n$ & \multicolumn{1}{l}{60} & \multicolumn{1}{l}{60} & \multicolumn{1}{l}{60} & \multicolumn{1}{l}{60} \\
		\hhline{~------} & 8 & Mean & 0.04560\% & 0.05072\% & 0.12079\% & 0.20614\% \\
		& & Std. Dev. & 0.20377\% & 0.19954\% & 0.39895\% & 0.64658\% \\
		& & $n$ & \multicolumn{1}{l}{60} & \multicolumn{1}{l}{60} & \multicolumn{1}{l}{60} & \multicolumn{1}{l}{60} \\
		\hline 512 & 4 & Mean & 0.11863\% & 0.15723\% & 0.22745\% & 0.53016\% \\
		& & Std. Dev. & 0.33121\% & 0.36941\% & 0.42859\% & 0.67835\% \\
		& & $n$ & \multicolumn{1}{l}{60} & \multicolumn{1}{l}{60} & \multicolumn{1}{l}{59} & \multicolumn{1}{l}{60} \\
		\hhline{~------} & 8 & Mean & 0.05038\% & 0.09224\% & 0.20583\% & 0.48200\% \\
		& & Std. Dev. & 0.14546\% & 0.29561\% & 0.47322\% & 0.70649\% \\
		& & $n$ & \multicolumn{1}{l}{60} & \multicolumn{1}{l}{60} & \multicolumn{1}{l}{60} & \multicolumn{1}{l}{60} \\
		\hline
	\end{tabular}
	\caption{Mean and standard deviation of relative error for the {\mbox{$\textsc{RR-Child}(0.00)$}} scheme solutions over all {\textsc{Structured-Type}} computational trials. Note that the statistics for 512 nodes, 4 arcs per node, and 10\% of sinks interdependent were calculated based only on the 59 successful trials.}
	\label{table:rrc0gapnode}
\end{table}

\begin{table}[p]
	\centering
	\begin{tabular}{l l l r r r r}
		\hline \multicolumn{7}{l}{\textbf{{\textsc{Structured-Type}}, {\mbox{$\textsc{RR-Child}\mathbf{(0.01)}$}}}} \\
		Nodes & Arcs/Node & & \multicolumn{1}{l}{2\%} & \multicolumn{1}{l}{5\%} & \multicolumn{1}{l}{10\%} & \multicolumn{1}{l}{15\%} \\
		\hline 256 & 4 & Mean & 0.15995\% & 0.17999\% & 0.42908\% & 0.51373\% \\
		& & Std. Dev. & 0.73355\% & 0.50584\% & 1.06876\% & 0.93875\% \\
		& & $n$ & \multicolumn{1}{l}{60} & \multicolumn{1}{l}{60} & \multicolumn{1}{l}{60} & \multicolumn{1}{l}{60} \\
		\hhline{~------} & 8 & Mean & 0.08235\% & 0.05400\% & 0.16980\% & 0.20614\% \\
		& & Std. Dev. & 0.34519\% & 0.20031\% & 0.53964\% & 0.64658\% \\
		& & $n$ & \multicolumn{1}{l}{60} & \multicolumn{1}{l}{60} & \multicolumn{1}{l}{60} & \multicolumn{1}{l}{60} \\
		\hline 512 & 4 & Mean & 0.12555\% & 0.16074\% & 0.24860\% & 0.61767\% \\
		& & Std. Dev. & 0.33302\% & 0.36889\% & 0.43587\% & 0.89859\% \\
		& & $n$ & \multicolumn{1}{l}{60} & \multicolumn{1}{l}{60} & \multicolumn{1}{l}{60} & \multicolumn{1}{l}{60} \\
		\hhline{~------} & 8 & Mean & 0.08089\% & 0.10822\% & 0.22707\% & 0.51905\% \\
		& & Std. Dev. & 0.25132\% & 0.32182\% & 0.47803\% & 0.75221\% \\
		& & $n$ & \multicolumn{1}{l}{60} & \multicolumn{1}{l}{60} & \multicolumn{1}{l}{60} & \multicolumn{1}{l}{60} \\
		\hline
	\end{tabular}
	\caption{Mean and standard deviation of relative error for the {\mbox{$\textsc{RR-Child}(0.01)$}} scheme solutions over all {\textsc{Structured-Type}} computational trials.}
	\label{table:rrc1gapnode}
\end{table}

\begin{table}[p]
	\centering
	\begin{tabular}{l l l r r r r}
		\hline \multicolumn{7}{l}{\textbf{{\textsc{Structured-Type}}, {\mbox{$\textsc{RR-Child}\mathbf{(0.05)}$}}}} \\
		Nodes & Arcs/Node & & \multicolumn{1}{l}{2\%} & \multicolumn{1}{l}{5\%} & \multicolumn{1}{l}{10\%} & \multicolumn{1}{l}{15\%} \\
		\hline 256 & 4 & Mean & 0.15995\% & 0.27764\% & 0.83326\% & 0.99746\% \\
		& & Std. Dev. & 0.73355\% & 0.70453\% & 1.78568\% & 1.82337\% \\
		& & $n$ & \multicolumn{1}{l}{60} & \multicolumn{1}{l}{60} & \multicolumn{1}{l}{60} & \multicolumn{1}{l}{60} \\
		\hhline{~------} & 8 & Mean & 0.17594\% & 0.14096\% & 0.24590\% & 0.26612\% \\
		& & Std. Dev. & 0.64976\% & 0.53061\% & 0.62247\% & 0.70883\% \\
		& & $n$ & \multicolumn{1}{l}{60} & \multicolumn{1}{l}{60} & \multicolumn{1}{l}{60} & \multicolumn{1}{l}{60} \\
		\hline 512 & 4 & Mean & 0.29846\% & 0.32916\% & 0.51934\% & 1.02361\% \\
		& & Std. Dev. & 0.63220\% & 0.63879\% & 0.77623\% & 1.32727\% \\
		& & $n$ & \multicolumn{1}{l}{60} & \multicolumn{1}{l}{60} & \multicolumn{1}{l}{60} & \multicolumn{1}{l}{60} \\
		\hhline{~------} & 8 & Mean & 0.15358\% & 0.17735\% & 0.40364\% & 0.63427\% \\
		& & Std. Dev. & 0.39285\% & 0.43903\% & 0.63889\% & 0.89402\% \\
		& & $n$ & \multicolumn{1}{l}{60} & \multicolumn{1}{l}{60} & \multicolumn{1}{l}{60} & \multicolumn{1}{l}{60} \\
		\hline
	\end{tabular}
	\caption{Mean and standard deviation of relative error for the {\mbox{$\textsc{RR-Child}(0.05)$}} scheme solutions over all {\textsc{Structured-Type}} computational trials.}
	\label{table:rrc5gapnode}
\end{table}

\begin{table}[p]
	\centering
	\begin{tabular}{l l l r r r r}
		\hline \multicolumn{7}{l}{\textbf{{\textsc{Structured-Type}}, {\mbox{$\textsc{RR-Fair}$}}}} \\
		Nodes & Arcs/Node & & \multicolumn{1}{l}{2\%} & \multicolumn{1}{l}{5\%} & \multicolumn{1}{l}{10\%} & \multicolumn{1}{l}{15\%} \\
		\hline 256 & 4 & Mean & 1.69196\% & 1.68475\% & 3.71021\% & 5.60403\% \\
		& & Std. Dev. & 2.51891\% & 1.88534\% & 3.31362\% & 4.90118\% \\
		& & $n$ & \multicolumn{1}{l}{60} & \multicolumn{1}{l}{60} & \multicolumn{1}{l}{60} & \multicolumn{1}{l}{60} \\
		\hhline{~------} & 8 & Mean & 1.30573\% & 1.33120\% & 2.03500\% & 2.76798\% \\
		& & Std. Dev. & 2.21282\% & 1.83162\% & 2.67148\% & 3.29595\% \\
		& & $n$ & \multicolumn{1}{l}{60} & \multicolumn{1}{l}{60} & \multicolumn{1}{l}{60} & \multicolumn{1}{l}{60} \\
		\hline 512 & 4 & Mean & 0.93481\% & 1.80408\% & 3.24969\% & 5.04507\% \\
		& & Std. Dev. & 0.98142\% & 1.50691\% & 2.20683\% & 3.20177\% \\
		& & $n$ & \multicolumn{1}{l}{60} & \multicolumn{1}{l}{60} & \multicolumn{1}{l}{60} & \multicolumn{1}{l}{60} \\
		\hhline{~------} & 8 & Mean & 0.92729\% & 1.27037\% & 1.27321\% & 1.92335\% \\
		& & Std. Dev. & 1.00769\% & 1.40316\% & 1.14424\% & 1.81029\% \\
		& & $n$ & \multicolumn{1}{l}{60} & \multicolumn{1}{l}{60} & \multicolumn{1}{l}{60} & \multicolumn{1}{l}{60} \\
		\hline
	\end{tabular}
	\caption{Mean and standard deviation of relative error for the {\mbox{$\textsc{RR-Fair}$}} scheme solutions over all {\textsc{Structured-Type}} computational trials.}
	\label{table:rrfgapnode}
\end{table}

\FloatBarrier
\newpage

\begin{table}[p]
	\centering
	\begin{tabular}{l l r r r r}
		\hline \multicolumn{6}{l}{\textbf{{\textsc{Unstructured-Type}}, {\mbox{$\textsc{RR-Child}\mathbf{(0.00)}$}}}} \\
		Nodes & Arcs/Node & \multicolumn{1}{l}{1\%} & \multicolumn{1}{l}{2\%} & \multicolumn{1}{l}{5\%} & \multicolumn{1}{l}{10\%} \\
		\hline 256 & 4 & {3.3\%} & 1.7\% & 0.0\% & 8.3\% \\
		\hhline{~-----} & 8 & 0.0\% & 0.0\% & 0.0\% & 5.0\% \\
		\hline 512 & 4 & 0.0\% & 1.7\% & {6.7\%} & 15.0\% \\
		\hhline{~-----} & 8 & {1.7\%} & 1.7\% & {1.7\%} & 1.7\% \\
		\hline
	\end{tabular}
	\caption{Failure rates for {\mbox{$\textsc{RR-Child}(0.00)$}} in {\textsc{Unstructured-Type}} trials. Percentages indicate the fraction of trials for which the randomized rounding scheme was unable to obtain a feasible solution within 1000 iterations.}
	\label{table:rrc0fail}
\end{table}

\begin{table}[p]
	\centering
	\begin{tabular}{l l r r r r}
		\hline \multicolumn{6}{l}{\textbf{{\textsc{Unstructured-Type}}, {\mbox{$\textsc{RR-Child}\mathbf{(0.01)}$}}}} \\
		Nodes & Arcs/Node & \multicolumn{1}{l}{1\%} & \multicolumn{1}{l}{2\%} & \multicolumn{1}{l}{5\%} & \multicolumn{1}{l}{10\%} \\
		\hline 256 & 4 & {3.3\%} & 0.0\% & 0.0\% & 3.3\% \\
		\hhline{~-----} & 8 & 0.0\% & 0.0\% & 0.0\% & 0.0\% \\
		\hline 512 & 4 & 0.0\% & 0.0\% & 0.0\% & 5.0\% \\
		\hhline{~-----} & 8 & {1.7\%} & 0.0\% & 0.0\% & 1.7\% \\
		\hline
	\end{tabular}
	\caption{Failure rates for {\mbox{$\textsc{RR-Child}(0.01)$}} in {\textsc{Unstructured-Type}} trials. Percentages indicate the fraction of trials for which the randomized rounding scheme was unable to obtain a feasible solution within 1000 iterations.}
	\label{table:rrc1fail}
\end{table}

\begin{table}[p]
	\centering
	\begin{tabular}{l l r r r r}
		\hline \multicolumn{6}{l}{\textbf{{\textsc{Unstructured-Type}}, {\mbox{$\textsc{RR-Child}\mathbf{(0.05)}$}}}} \\
		Nodes & Arcs/Node & \multicolumn{1}{l}{1\%} & \multicolumn{1}{l}{2\%} & \multicolumn{1}{l}{5\%} & \multicolumn{1}{l}{10\%} \\
		\hline 256 & 4 & {3.3\%} & 0.0\% & 0.0\% & 1.7\% \\
		\hhline{~-----} & 8 & 0.0\% & 0.0\% & 0.0\% & 0.0\% \\
		\hline 512 & 4 & 0.0\% & 0.0\% & 0.0\% & 1.7\% \\
		\hhline{~-----} & 8 & {1.7\%} & 0.0\% & 0.0\% & 1.7\% \\
		\hline
	\end{tabular}
	\caption{Failure rates for {\mbox{$\textsc{RR-Child}(0.05)$}} in {\textsc{Unstructured-Type}} trials. Percentages indicate the fraction of trials for which the randomized rounding scheme was unable to obtain a feasible solution within 1000 iterations.}
	\label{table:rrc5fail}
\end{table}

\begin{table}[p]
	\centering
	\begin{tabular}{l l r r r r}
		\hline \multicolumn{6}{l}{\textbf{{\textsc{Unstructured-Type}}, {\mbox{$\textsc{RR-Fair}$}}}} \\
		Nodes & Arcs/Node & \multicolumn{1}{l}{1\%} & \multicolumn{1}{l}{2\%} & \multicolumn{1}{l}{5\%} & \multicolumn{1}{l}{10\%} \\
		\hline 256 & 4 & {3.3\%} & 0.0\% & {11.7\%} & 96.7\% \\
		\hhline{~-----} & 8 & 0.0\% & 8.3\% & {86.7\%} & 100.0\% \\
		\hline 512 & 4 & 0.0\% & 11.7\% & {91.7\%} & 100.0\% \\
		\hhline{~-----} & 8 & {11.7\%} & 71.7\% & 100.0\% & 100.0\% \\
		\hline
	\end{tabular}
	\caption{Failure rates for {\mbox{$\textsc{RR-Fair}$}} in {\textsc{Unstructured-Type}} trials. Percentages indicate the fraction of trials for which the randomized rounding scheme was unable to obtain a feasible solution within 1000 iterations.}
	\label{table:rrffail}
\end{table}

\FloatBarrier
\newpage

\begin{table}[p]
	\centering
	\begin{tabular}{l l l r r r r}
		\hline \multicolumn{7}{l}{\textbf{{\textsc{Unstructured-Type}}, {\mbox{$\textsc{RR-Child}\mathbf{(0.00)}$}}}} \\
		Nodes & Arcs/Node & & \multicolumn{1}{l}{1\%} & \multicolumn{1}{l}{2\%} & \multicolumn{1}{l}{5\%} & \multicolumn{1}{l}{10\%} \\
		\hline 256 & 4 & Mean & {0.08042\%} & 0.09553\% & {0.25293\%} & 0.78186\% \\
		& & Std. Dev. & {0.28710\%} & 0.36233\% & {0.52748\%} & 1.10045\% \\
		& & $n$ & \multicolumn{1}{l}{{58}} & \multicolumn{1}{l}{59} & \multicolumn{1}{l}{60} & \multicolumn{1}{l}{55} \\
		\hhline{~------} & 8 & Mean & {0.01565\%} & 0.08292\% & {0.32456\%} & 0.17906\% \\
		& & Std. Dev. & {0.07513\%} & 0.34117\% & {1.29113\%} & 0.49942\% \\
		& & $n$ & \multicolumn{1}{l}{60} & \multicolumn{1}{l}{60} & \multicolumn{1}{l}{60} & \multicolumn{1}{l}{57} \\
		\hline 512 & 4 & Mean & {0.07700\%} & 0.10265\% & {0.20542\%} & 0.49078\% \\
		& & Std. Dev. & {0.26175\%} & 0.25362\% & {0.45111\%} & 0.57372\% \\
		& & $n$ & \multicolumn{1}{l}{60} & \multicolumn{1}{l}{59} & \multicolumn{1}{l}{{56}} & \multicolumn{1}{l}{51} \\
		\hhline{~------} & 8 & Mean & {0.45111\%} & 0.09384\% & {0.09531\%} & 0.34873\% \\
		& & Std. Dev. & {0.08387\%} & 0.30559\% & {0.21521\%} & 0.60217\% \\
		& & $n$ & \multicolumn{1}{l}{{59}} & \multicolumn{1}{l}{59} & \multicolumn{1}{l}{{59}} & \multicolumn{1}{l}{59} \\
		\hline
	\end{tabular}
	\caption{Mean and standard deviation of relative error for {\mbox{$\textsc{RR-Child}(0.00)$}} scheme solutions over all successful {\textsc{Unstructured-Type}} computational trials.}
	\label{table:rrc0gaparc}
\end{table}

\begin{table}[p]
	\centering
	\begin{tabular}{l l l r r r r}
		\hline \multicolumn{7}{l}{\textbf{{\textsc{Unstructured-Type}}, {\mbox{$\textsc{RR-Child}\mathbf{(0.01)}$}}}} \\
		Nodes & Arcs/Node & & \multicolumn{1}{l}{1\%} & \multicolumn{1}{l}{2\%} & \multicolumn{1}{l}{5\%} & \multicolumn{1}{l}{10\%} \\
		\hline 256 & 4 & Mean & {0.14013\%} & 0.27409\% & {0.52975\%} & 1.50022\% \\
		& & Std. Dev. & {0.52865\%} & 0.72796\% & {1.22847\%} & 2.16678\% \\
		& & $n$ & \multicolumn{1}{l}{{58}} & \multicolumn{1}{l}{60} & \multicolumn{1}{l}{60} & \multicolumn{1}{l}{58} \\
		\hhline{~------} & 8 & Mean & {0.61139\%} & 1.09969\% & {2.04546\%} & 3.54538\% \\
		& & Std. Dev. & {1.79169\%} & 2.93195\% & {4.06981\%} & 5.15958\% \\
		& & $n$ & \multicolumn{1}{l}{60} & \multicolumn{1}{l}{60} & \multicolumn{1}{l}{60} & \multicolumn{1}{l}{60} \\
		\hline 512 & 4 & Mean & {0.14450\%} & 0.35749\% & {0.51950\%} & 1.65035\% \\
		& & Std. Dev. & {0.38393\%} & 0.92922\% & {1.03785\%} & 2.41296\% \\
		& & $n$ & \multicolumn{1}{l}{60} & \multicolumn{1}{l}{60} & \multicolumn{1}{l}{60} & \multicolumn{1}{l}{57} \\
		\hhline{~------} & 8 & Mean & {0.49411\%} & 0.71963\% & {1.44727\%} & 4.06781\% \\
		& & Std. Dev. & {1.42606\%} & 1.38286\% & {2.13480\%} & 3.92718\% \\
		& & $n$ & \multicolumn{1}{l}{{59}} & \multicolumn{1}{l}{60} & \multicolumn{1}{l}{60} & \multicolumn{1}{l}{59} \\
		\hline
	\end{tabular}
	\caption{Mean and standard deviation of relative error for {\mbox{$\textsc{RR-Child}(0.01)$}} scheme solutions over all successful {\textsc{Unstructured-Type}} computational trials.}
	\label{table:rrc1gaparc}
\end{table}

\begin{table}[p]
	\centering
	\begin{tabular}{l l l r r r r}
		\hline \multicolumn{7}{l}{\textbf{{\textsc{Unstructured-Type}}, {\mbox{$\textsc{RR-Child}\mathbf{(0.05)}$}}}} \\
		Nodes & Arcs/Node & & \multicolumn{1}{l}{1\%} & \multicolumn{1}{l}{2\%} & \multicolumn{1}{l}{5\%} & \multicolumn{1}{l}{10\%} \\
		\hline 256 & 4 & Mean & {0.57540\%} & 1.33160\% & {2.41167\%} & 5.33097\% \\
		& & Std. Dev. & {1.44484\%} & 2.25426\% & {3.64680\%} & 5.02341\% \\
		& & $n$ & \multicolumn{1}{l}{{58}} & \multicolumn{1}{l}{60} & \multicolumn{1}{l}{60} & \multicolumn{1}{l}{59} \\
		\hhline{~------} & 8 & Mean & {1.72941\%} & 3.44290\% & {8.96035\%} & 16.80720\% \\
		& & Std. Dev. & {2.91170\%} & 4.67988\% & {8.84133\%} & 10.32456\% \\
		& & $n$ & \multicolumn{1}{l}{60} & \multicolumn{1}{l}{60} & \multicolumn{1}{l}{60} & \multicolumn{1}{l}{60} \\
		\hline 512 & 4 & Mean & {0.39119\%} & 1.13194\% & {2.18552\%} & 4.94240\% \\
		& & Std. Dev. & {0.85671\%} & 1.66150\% & {2.41066\%} & 3.47922\% \\
		& & $n$ & \multicolumn{1}{l}{60} & \multicolumn{1}{l}{60} & \multicolumn{1}{l}{60} & \multicolumn{1}{l}{59} \\
		\hhline{~------} & 8 & Mean & {1.57806\%} & 3.87011\% & {9.29088\%} & 15.76693\% \\
		& & Std. Dev. & {2.56120\%} & 3.59570\% & {6.51458\%} & 8.10469\% \\
		& & $n$ & \multicolumn{1}{l}{{59}} & \multicolumn{1}{l}{60} & \multicolumn{1}{l}{60} & \multicolumn{1}{l}{59} \\
		\hline
	\end{tabular}
	\caption{Mean and standard deviation of relative error for {\mbox{$\textsc{RR-Child}(0.05)$}} scheme solutions over all successful {\textsc{Unstructured-Type}} computational trials.}
	\label{table:rrc5gaparc}
\end{table}

\begin{table}[p]
	\centering
	\begin{tabular}{l l l r r r r}
		\hline \multicolumn{7}{l}{\textbf{{\textsc{Unstructured-Type}}, {\mbox{$\textsc{RR-Fair}$}}}} \\
		Nodes & Arcs/Node & & \multicolumn{1}{l}{1\%} & \multicolumn{1}{l}{2\%} & \multicolumn{1}{l}{5\%} & \multicolumn{1}{l}{10\%} \\
		\hline 256 & 4 & Mean & {6.25055\%} & 12.06161\% & {22.86896\%} & 57.94037\% \\
		& & Std. Dev. & {6.03445\%} & 7.11377\% & {8.44746\%} & 0.78205\% \\
		& & $n$ & \multicolumn{1}{l}{{58}} & \multicolumn{1}{l}{60} & \multicolumn{1}{l}{{53}} & \multicolumn{1}{l}{2} \\
		\hhline{~------} & 8 & Mean & {19.62852\%} & 38.37295\% & {83.90358\%} & \multicolumn{1}{c}{--} \\
		& & Std. Dev. & {11.38919\%} & 18.65668\% & {15.18412\%} & \multicolumn{1}{c}{--} \\
		& & $n$ & \multicolumn{1}{l}{60} & \multicolumn{1}{l}{55} & \multicolumn{1}{l}{{8}} & \multicolumn{1}{l}{0} \\
		\hline 512 & 4 & Mean & {6.29454\%} & 10.59501\% & {22.63133\%} & \multicolumn{1}{c}{--} \\
		& & Std. Dev. & {3.56471\%} & 5.17104\% & {8.17028\%} & \multicolumn{1}{c}{--} \\
		& & $n$ & \multicolumn{1}{l}{60} & \multicolumn{1}{l}{53} & \multicolumn{1}{l}{{5}} & \multicolumn{1}{l}{0} \\
		\hhline{~------} & 8 & Mean & {19.80782\%} & 37.17727\% & \multicolumn{1}{c}{--} & \multicolumn{1}{c}{--} \\
		& & Std. Dev. & {7.31611\%} & 11.84658\% & \multicolumn{1}{c}{--} & \multicolumn{1}{c}{--} \\
		& & $n$ & \multicolumn{1}{l}{{53}} & \multicolumn{1}{l}{17} & \multicolumn{1}{l}{0} & \multicolumn{1}{l}{0} \\
		\hline
	\end{tabular}
	\caption{Mean and standard deviation of relative error for {\mbox{$\textsc{RR-Fair}$}} scheme solutions over all successful {\textsc{Unstructured-Type}} computational trials. Dashes indicate trial sets for which no feasible solutions could be found.}
	\label{table:rrfgaparc}
\end{table}

\end{document}